\newcommand{\bdm}{\begin{displaymath}}
\newcommand{\edm}{\end{displaymath}}
\newcommand{\Z}{\mathbb{Z}}
\newcommand{\co}{\colon\thinspace}
\newcommand{\X}{{\mathcal X}}
\newcommand{\cK}{{\mathcal K}}
\newcommand{\e}{\epsilon}
\newcommand{\cL}{\mathcal L}
\newcommand{\cH}{\mathcal H}
\newcommand{\Y}{{\mathcal Y}}
\newcommand{\id}{\operatorname{id}}
\newcommand{\Map}{\operatorname{Map}}
\newcommand{\ev}{\operatorname{{\it ev}}}
\newcommand{\cat}{\operatorname{cat}}
\newcommand{\gcat}{\operatorname{cat_G}}
\newcommand{\Gd}{{\mathcal G}}
\theoremstyle{definition}
\newtheorem{defn}{Definition}[section]
\newtheorem{exam}[defn]{Example}
\newtheorem{remark}[defn]{Remark}
\theoremstyle{plain}
\newtheorem{thm}[defn]{Theorem}
\newtheorem{prop}[defn]{Proposition}
\newtheorem{cor}[defn]{Corollary}
\begin{document}

\title[]{G-category versus orbifold category}
\author{A. Angel}
 \author{H. Colman}

\date{\today}
\dedicatory{In memory of  Edward Fadell and Sufian Husseini.}

\begin{abstract} We present a comparative study of certain invariants defined for group actions and their analogues defined for orbifolds. In particular, we prove that Fadell's equivariant category for $G$-spaces coincides with the Lusternik-Schnirelmann category for orbifolds when the group is finite. 
\end{abstract}

\maketitle
\section*{Foreword}
It is a great pleasure for us to dedicate this work to the memory of Edward Fadell and Sufian Husseini whose broad contributions to several fields have greatly influenced our own work. The first author's work in Topological Combinatorics is significantly rooted on the celebrated Fadell-Husseini index. The second author's PhD thesis was vastly inspired by Fadell's first papers on fixed point theory, equivariant maps and Lusternik-Schnirelmann invariants. Fadell and Husseini's seminal work on configuration spaces continues to inspire to this day not only her work but also her students' work on Topological Robotics.

\section*{Introduction}
All orbifolds may be described as global quotients of spaces by compact group actions with finite isotropy groups \cite{pardon}. But different group actions may describe the same orbifold. The notion of Morita equivalence provides the means to classify group actions: an orbifold will be determined by a Morita equivalence class of group actions.

In this paper, we propose a comparative study of invariants defined for group actions and their analogues for orbifolds. Some invariants can be adapted and extended to the orbifold setting, even though there are some important differences.

We have previously introduced in \cite{AC} the notion of {\em path orbifold} (or orbifold of paths) for  discrete group actions. We compare the equivariant classical notion of $G$-connectedness for $G$-spaces with the orbifold notion of connectedness derived from our orbifold of paths. Some $G$-spaces that are not $G$-connected, turn out to be connected when considered as orbifolds. 

The Lusternik-Schnirelmann category was introduced in the 1930's with the goal of providing a lower bound for the number of critical points for any smooth function on a compact manifold \cite{opreaSurvey}.  Presently the LS category is an important numerical invariant in algebraic topology, critical point theory, symplectic geometry and topological robotics. An equivariant version of  the LS category was introduced by Fadell in \cite{F} and later studied further in \cite{MARZ, Clapp, Colman2005, morita}. A {\em $G$-categorical} set in the sense of Fadell is one that can be compressed into an orbit via an equivariant homotopy. The $G$-category is the minimal number of $G$-categorical sets needed to cover a $G$-space. From our notion of path orbifold, we derive a notion of orbifold homotopy. Our main result establishes that the $G$-category coincides with the orbifold LS category for finite group actions. Our methods include the theory of Hilsum-Skandalis maps \cite{M} which allows us to state a clear relationship between 
  equivariant and orbifold maps, and provides a blueprint for similar comparisons of other invariants. 
 
This paper begins with two background sections covering the areas of $G$-spaces and topological groupoids. Section 1 gives the basic definitions in the category of $G$-spaces, consisting of spaces with an action
of the group $G$ and equivariant maps between them. Section 2 establishes the basic definitions in the bicategory of topological groupoids which includes the notion of Morita equivalence and generalized maps. We introduce here the concept of the path groupoid and derive the groupoid homotopy notion. A brief introduction to orbifolds is given in section 3. In section 4 we commence the comparative study of invariants for $G$-spaces and for orbifolds. We show here that the property of being connected is a point where the two theories diverge. In section 5 we present an adaptation of the theory of Hilsum-Skandalis maps for group actions and give an explicit condition for a generalized map between groupoids to be equivalent to a $G$-map between $G$-spaces. In the final section we continue to explore the comparison of invariants. We introduce the orbifold LS category derived from the orbifold homotopy notion and prove that the $G$-category introduced by Fadell for $G$-spaces coincides with our orbifold LS category when the group is finite.

\section{Background on $G$-spaces}

A {\em $G$-space} is a Hausdorff topological space $X$ equipped with a continuous action of a topological group $G$.

For each $x\in X$ the {\em isotropy group} $G_x=\{h\in G\mid  hx=x\}$ is a closed subgroup of $G$. The set $Gx=\{gx\mid g\in G\}\subseteq X$ is called the {\em orbit} of $x$. 

The {\em orbit space} $X/G$ is the set of equivalence classes determined by the action, endowed with the quotient topology. 

 If $H$ is a closed subgroup of $G$, then $X^H=\{x\in X|\; hx=x \mbox{ for all }h\in H\}$ is called the {\em $H$-fixed point set} of $X$. We call $x$ a {\em global fixed point} if $x \in X^G$.
 
 If $X$ and $Y$ are both $G$-spaces for the same group $G$, then a {\em $G$-map} is a map $f: X \to Y$ such that $f(gx) = gf(x)$
for all $g\in G$ and all $x\in X$.

\subsection{G-homotopy}

Let $X$ and $Y$ be $G$-spaces.
Two $G$-maps $\phi, \psi\co  X\to Y$ are {\em $G$-homotopic}, written $\phi\simeq_G \psi$, if there is a $G$-map $F\co X \times I \to Y$ with $F_0=\phi$ and $F_1=\psi$, where $G$ acts trivially on $I$ and diagonally on $X\times I$.

If there exist $G$-maps $\phi\co X\to Y$ and $\psi\co Y\to X$ such that $\phi\psi\simeq_G \id_Y$ and $\psi\phi\simeq_G \id_X$, then $\phi$ and $\psi$ are {\em $G$-homotopy equivalences}, and $X$ and $Y$ are {\em $G$-homotopy equivalent}, written $X\simeq_G Y$.

\section{Background on topological groupoids}
A {\em topological groupoid $\Gd$} given by $G_1\rightrightarrows G_0$ is an internal groupoid in the category of topological spaces; that is a groupoid with a topological space of objects $G_0$ and one of morphisms $G_1$ together with the usual structure maps:  source and target  $s, t\colon  G_1 \to G_0$, identity arrows determined by  $u\colon  G_0 \to G_1$,  inversion $i : G_1 \to  G_1$, and composition
$m\colon  G_1 \times_{s, G_0, t} G_1 \to G_1$, all given by continuous maps, such that $s$ (and therefore $t$)  is an open surjection.

Let $G$ be a topological group acting continuously on a Hausdorff space $X$. From this data we can construct a topological groupoid, the {\em translation groupoid} $G\ltimes X$, whose objects are the elements of $X$ and whose morphisms $x \to y$ are pairs $(g, x) \in G \times X$ such that $gx = y$, with composition induced by multiplication in $G$. That is, $G\ltimes X$ is the groupoid $G\times X\rightrightarrows X$ where the source is the second projection and the target map is given by the action.

An {\em equivariant map} $m\ltimes \e: K\ltimes Z\to G\ltimes X$ consists of a homomorphism $m: K \to G$ and a continuous map $\e: Z\to X$,  such that $\e(kz)=m(k)\e(z)$ for all $k\in K$, $z\in Z$.

\subsection{Natural transformations.}
Let $m\ltimes \phi, n\ltimes \psi: K\ltimes Z\to G\ltimes X$ be two equivariant maps. A {\em natural transformation} $\lambda$ from $\phi$
to $\psi$  is a map $\lambda:Z\to G\times X$ giving for
each $z\in Z$ an arrow $\lambda(z):\phi(z)\to \psi(z)$ in $G\times X$, such  that for any
$k$ in $K$ the identity $n(k)\lambda(z)=\lambda(kz)m(k)$ holds. We write $\phi\sim \psi$ if
such a $\lambda$ exists. 

\subsection{Morita equivalence}

An {\em essential equivalence} $m\ltimes \e: K\ltimes Z\to H\ltimes Y$ is an equivariant map satisfying:
\begin{enumerate}
\item(essentially surjective) $\phi'\circ \pi$ is an open surjection:
$$\xymatrix{Z\times_Y(H\times Y) \ar[r]^{\pi}  \ar[d]^{}& H \times Y \ar[r]^{\phi'}\ar[d]^{p_2}&Y \\ Z  \ar[r]^{\e}& Y& }$$
\item(fully faithful) the following diagram is a pullback:
$$\xymatrix{K\times Z\ar[d]^{(p_2, \phi)}  \ar[r]^{m\times \e}& H \times Y \ar[d]^{(p_2, \phi')} \\ Z\times Z  \ar[r]^{\e\times \e}& Y\times Y }$$
i.e. $K\times Z=\{((h,y),(z,z')) | y=\e(z), hy=\e(z')\}$.
\end{enumerate}

The first condition implies that for all $y\in Y$, there exists $z\in Z$ and $h\in H$ such that $\e(z)=hy$,
in other words an essential equivalence is not necessarily surjective but has to reach all of the orbits. The second assures that the isotropies are kept;
an essential equivalence cannot send points from different orbits in $Z$ to the same orbit  in $Y$ and there is a bijection between the sets:
$$\{k\in K| z'=kz\}=\{h\in H | \e(z')=h\e(z)\}.$$

Pronk and Scull showed a characterization of essential equivalences for group actions that can be used in practice to construct or check Morita equivalences.

\begin{prop}\cite{Pronk:2010} Any essential equivalence is a composite of maps of the forms (1) and (2) described below.
\begin{enumerate} 
\item\label{pronk1}(quotient map) $K\ltimes Z \to K/L\ltimes Z/L$ where $L$ is a normal subgroup of $G$ acting freely on $Z$.
\item\label{pronk2} (inclusion map) $K\ltimes Z \to  G\ltimes (G\times_KZ)  $ where $K$ is a (not necessarily normal) subgroup of $G$ acting on $Z$ and $G\times_KZ=G\times Z/\sim$ with $(gk^{-1},kz)\sim (g,z)$ for any $k \in K$.
\end{enumerate}
\end{prop}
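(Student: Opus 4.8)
The plan is to factor a given essential equivalence $m\ltimes\e\co K\ltimes Z\to H\ltimes Y$ through two intermediate translation groupoids: first a type-(1) quotient map that collapses $\ker m$, and then, once the group homomorphism has become injective, an identification of the residual map with a type-(2) inclusion, up to an isomorphism of topological groupoids.

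First I would extract the kernel. The last displayed bijection in the Morita-equivalence discussion, specialised to $z'=z$, gives for each $z\in Z$ a bijection $\{k\in K\mid kz=z\}\to\{h\in H\mid h\e(z)=\e(z)\}$ induced by $m$; in particular $m$ is injective on every isotropy group $K_z$. Hence $L:=\ker m$ is a normal subgroup of $K$ meeting each $K_z$ trivially, that is, $L$ acts freely on $Z$, so the quotient $q\co K\ltimes Z\to K/L\ltimes Z/L$ is a map of type (1). Since $\e(\ell z)=m(\ell)\e(z)=\e(z)$ for $\ell\in L$, both $m$ and $\e$ descend to an equivariant map $\bar m\ltimes\bar\e\co K/L\ltimes Z/L\to H\ltimes Y$ with $(\bar m\ltimes\bar\e)\circ q=m\ltimes\e$ and $\bar m$ injective. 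As $q$ is itself an essential equivalence and essential equivalences are closed under composition and satisfy the two-out-of-three property, $\bar m\ltimes\bar\e$ is again an essential equivalence, and we have reduced to the case in which $m$ is injective, so that $K$ is a (topological) subgroup of $H$.

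In the injective case I would write the factorization down explicitly. Set $\bar\e\co H\times_K Z\to Y$, $[h,z]\mapsto h\e(z)$; equivariance of $\e$ makes this well defined and $H$-equivariant, and $m\ltimes\e$ factors as $(\id_H\ltimes\bar\e)\circ\iota$, where $\iota\co K\ltimes Z\to H\ltimes(H\times_K Z)$, $z\mapsto[e,z]$, is precisely the inclusion map of type (2). It then suffices to show $\bar\e$ is a homeomorphism. Surjectivity and openness follow from the essential surjectivity condition, which states exactly that $H\times Z\to Y$, $(h,z)\mapsto h\e(z)$, is an open surjection; since this factors through the orbit map $H\times Z\to H\times_K Z$, which is open and surjective, $\bar\e$ is an open surjection. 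Injectivity follows from full faithfulness: if $h_1\e(z_1)=h_2\e(z_2)$, then $(h_1^{-1}h_2)\e(z_2)=\e(z_1)$, so there is $k\in K$ with $m(k)=h_1^{-1}h_2$ and $kz_2=z_1$, whence $[h_1,z_1]=[h_1k,z_2]=[h_2,z_2]$. Thus $\id_H\ltimes\bar\e$ is an isomorphism of topological groupoids, and the original essential equivalence is exhibited as a composite of a type-(1) quotient map, a type-(2) inclusion, and an isomorphism.

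The conceptual skeleton above is short; the hard part will be the point-set topology supporting it: checking that $q$ is an essential equivalence in the topologically enhanced sense (open surjectivity in condition (1), a genuine pullback in condition (2)), that the two-out-of-three property survives these enhancements, that $Z/L$ and $K/L\times Z/L$ remain in the working category when $L$ acts freely, and that $\bar m$ is a topological embedding so that $H\times_K Z$ is built from an honest subgroup — all routine under the standing hypotheses (finite, or compact Lie, isotropy), but requiring care. A secondary, cosmetic point is that the construction yields the decomposition only up to isomorphism of translation groupoids, so the statement should be read modulo such isomorphisms; likewise, ``$L$ a normal subgroup of $G$'' in clause (1) should be read as ``$L$ a normal subgroup of $K$'', which is what the argument delivers.
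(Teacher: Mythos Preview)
The paper does not itself prove this proposition; it is quoted from Pronk--Scull, and the paper only records the resulting factorization in the remark immediately following it. Your argument is correct and is the standard proof: it produces exactly the factorization $K\ltimes Z\to K/L\ltimes Z/L\to H\ltimes(H\times_{K/L}Z/L)\cong H\ltimes Y$ with $L=\ker m$ that the remark states, and your closing caveats (the point-set checks, reading the decomposition up to isomorphism, and the typo ``$G$'' for ``$K$'' in clause~(1)) are all apt.
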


\begin{remark} \label{pronk} An essential equivalence
$m\ltimes \e: K\ltimes Z\to H\ltimes Y$  can be factored as 
$$  K\ltimes Z\to   K/L\ltimes Z/L \to  H\ltimes (H\times_{K/L}{Z/L})  \cong  H\ltimes Y $$
where $L$ is a normal subgroup of $K$ acting freely on $Z$ and $K/L$ is a subgroup of $H$.
\end{remark}

The translation groupoids $G\ltimes X$ and $H\ltimes Y$ are {\em Morita equivalent} if there is a third groupoid $K\ltimes Z$ and two essential equivalences
$$\xymatrix{
{G\ltimes X}& {K\ltimes Z} \ar[r]^{m\ltimes \e} \ar[l]_{m'\ltimes \e'}& {H\ltimes Y}.
}$$
We write $G\ltimes X \sim_M H\ltimes Y$.

\subsection{Generalized maps}
A {\em generalized map} from $H\ltimes Y$ to $G\ltimes X$ is given by first replacing
$H\ltimes Y$ by a Morita equivalent groupoid $K\ltimes Z$ and then mapping $K\ltimes Z$ into $G\ltimes X$ by an equivariant map. 

We can formalize this  via the bicalculus of fractions \cite{Pronk1996}. Our context will be the Morita bicategory of translation groupoids \cite{Pronk:2010} obtained  by formally inverting the essential equivalences. Objects in this bicategory are translations groupoids,  $1$-morphisms are  generalized maps

$$H\ltimes Y\overset{m\ltimes \epsilon}{\gets}K\ltimes Z\overset{n\ltimes \phi}{\rightarrow}G\ltimes X$$
such that $m\ltimes \epsilon$ is an essential equivalence and 
2-morphisms from $H\ltimes Y\overset{m\ltimes \epsilon}{\gets}K\ltimes Z\overset{n\ltimes \phi}{\rightarrow}G\ltimes X$ to $H\ltimes Y\overset{m'\ltimes \epsilon'}{\gets}K'\ltimes Z'\overset{n\ltimes \phi}{\rightarrow}G\ltimes X$ are given by classes of diagrams:
$$
\xymatrix{ &
{K\ltimes Z}\ar[dr]^{n\ltimes \phi}="0" \ar[dl]_{m\ltimes \epsilon }="2"&\\
{H\ltimes Y}&{\cL} \ar[u]_{\nu} \ar[d]^{\mu}&{G\ltimes X}\\
&{K'\ltimes Z'}\ar[ul]^{m'\ltimes \epsilon'}="3" \ar[ur]_{n'\ltimes \phi'}="1"&
\ar@{}"0";"1"|(.4){\,}="7"
\ar@{}"0";"1"|(.6){\,}="8"
\ar@{}"7" ;"8"_{\sim}
\ar@{}"2";"3"|(.4){\,}="5"
\ar@{}"2";"3"|(.6){\,}="6"
\ar@{}"5" ;"6"^{\sim}
}
$$
where $\cL$ is a translation groupoid, and $\nu$ and $\mu$ are essential  equivalences.

We will see in this next section that the {\em path groupoid} of $G\ltimes X$ is defined as the mapping groupoid in this bicategory.

\subsection{Generalized paths}
\begin{defn}\cite{AC} A {\em generalized path} in the groupoid $G\ltimes X$ is a generalized map $I\overset{\e}{\gets}I'\overset{\alpha}{\to}G\ltimes X$.
\end{defn}

We will say that a groupoid is {\em developable} if it is Morita equivalent to a translation groupoid $G\ltimes X$ with $G$ a discrete group.

Consider the groupoid
$I_{S_n}$  associated to a subdivision $$S_n=\{0=r_0\le r_1<\cdots<r_{n-1}\le r_n=1\}$$ of the interval $I=[0,1]$ as explained below.

The space of objects of the groupoid $I_{S_n}$  is the disjoint union
$$\bigsqcup_{i=1}^{n} I_i$$
where $I_i$ is a small open neighborhood of $[r_{i-1}, r_{i}]$ and $(r,i)$ denotes an element $r$ in the connected component  $I_i$.

The space of arrows of $I_{S_n}$ is given by the disjoint union $$\left(\bigsqcup_{i=1}^{n}  I_i\right) \sqcup\left(\bigsqcup_{i=1}^{n-1} ( \tilde{I}_i \sqcup \tilde{I}_i)\right)$$

where $\displaystyle\bigsqcup_{i=1}^{n}  I_i$ is the set of unit arrows,
$ \tilde{I}_i= I_i\cap I_{i+1}$ and another copy $\tilde{I}_i$ was added for inverse arrows.

A  generalized path in a developable groupoid $G\ltimes X$ is equivalent to a generalized map $I\overset{\e}{\gets}I_{S_n}\overset{\alpha}{\to}G\ltimes X$ such that

\begin{enumerate}
\item
$\e: I_{S_n}\to I$ on objects is the inclusion on each connected component, $\e(r,i)=r$ and on arrows it sends all arrows to identity arrows, $\e(\tilde r_i)=\id_{r}$
\item 
$\alpha: I_{S_n}\to G\ltimes X$ on objects is given by a map $\alpha_i :  I_i\to X$ in each connected component and on arrows is given by 
$\alpha(\tilde r_i)=(k_i, \alpha_i( r))$
satisfying the condition $k_i  \alpha_i( r)=\alpha_{i+1}( r)$ for all $r\in \tilde I_i$.
\end{enumerate}

Notation: $\alpha=(\alpha_1,k_1, \alpha_1, \ldots,  k_{n-1}, \alpha_n)$.

Two generalized paths are equivalent if there exists a common subdivision $S_n$ and $g_i\in G$  such that $\beta_i(r)=g_i\alpha_i(r)$  and $k'_i=g_{i+1} k_i {g_i}^{-1}$ for all i.

$$
\xymatrix{ &
{I_{S_m}}\ar[dr]^{\alpha}="0" \ar[dl]_{\epsilon }="2"&\\
{I}&{I_{S_n}} \ar[u]_{a} \ar[d]^{b}&{G\ltimes X}\\
&{I_{S_{m'}}}\ar[ul]^{\tau}="3" \ar[ur]_{\beta}="1"&
\ar@{}"0";"1"|(.4){\,}="7"
\ar@{}"0";"1"|(.6){\,}="8"
\ar@{}"7" ;"8"_{\sim}
\ar@{}"2";"3"|(.4){\,}="5"
\ar@{}"2";"3"|(.6){\,}="6"
\ar@{}"5" ;"6"^{\sim}
}
$$

We defined the {\em path groupoid} of $G\ltimes X$ in \cite{AC} as $P=P(G\ltimes X)$ where the space of objects and arrows are given by the following colimits:

$$P_0= {\coprod  \Map( I_{S_n},G\ltimes X)}/\sim$$
and $$P_1= \left(\left({\coprod G^n}\right)/\sim \right)\times \left(\left({\coprod \Map( I_{S_n},G \ltimes X)}\right)/\sim\right)$$ 
\begin{thm}\cite{AC} Let $G$ be a discrete group. Then
$$P(G\ltimes X) \sim_M G\ltimes X^I$$
where $X^I$ is the path space of $X$.
\end{thm}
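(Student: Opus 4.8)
The plan is to construct an explicit essential equivalence, or rather a chain of them, linking the path groupoid $P = P(G\ltimes X)$ to the translation groupoid $G\ltimes X^I$, where $G$ acts on $X^I$ pointwise. The starting observation is that a generalized path $\alpha = (\alpha_1, k_1, \alpha_2, \ldots, k_{n-1}, \alpha_n)$ records a continuous path in $X$ only after we ``straighten out'' the transition arrows: on the overlap $\tilde I_i$ the path jumps from $\alpha_i(r)$ to $\alpha_{i+1}(r) = k_i\alpha_i(r)$, and conjugating successive pieces by a choice of group elements $g_i$ (with $g_1 = e$, $g_{i+1} = k_i g_i$, which is possible precisely because $G$ is discrete so the overlaps are connected and the $k_i$ are locally constant) replaces $\alpha$ by an equivalent representative $(\beta_1, e, \beta_2, e, \ldots, e, \beta_n)$ in which all transition arrows are identities. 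Such a representative glues to a genuine path $\beta \in X^I$. This assignment is well-defined on equivalence classes up to the residual ambiguity of a global group element $g_1 \in G$ acting on the whole path, which is exactly the morphism data in $G\ltimes X^I$.

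First I would make this into a functor $\Phi\co P \to G\ltimes X^I$ of topological groupoids: on objects send the class of $\alpha$ to the glued path $\beta\in X^I$ constructed above, and on morphisms send the class of $((g_1,\ldots,g_n),\alpha)$ to the pair $(g_1, \beta)\in G\times X^I$, checking that this respects the colimit topologies defining $P_0$ and $P_1$ and is compatible with source, target and composition. Second I would verify that $\Phi$ is an essential equivalence in the sense of the two conditions stated in the excerpt: \emph{essential surjectivity} is the easy direction, since any honest path $c\in X^I$ is literally the image of the one-piece generalized path $(c)$ over the trivial subdivision $S_1$; and \emph{full faithfulness} amounts to showing that the only natural transformations between the images of two generalized paths come from genuine $G$-arrows, i.e.\ that $\Phi$ induces a bijection between $P$-arrows $\alpha\to\alpha'$ and elements $g\in G$ with $g\cdot\beta = \beta'$. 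This last point is where the equivalence relation on generalized paths --- ``common subdivision plus conjugating elements $g_i$ with $k'_i = g_{i+1}k_i g_i^{-1}$'' --- does the work: two generalized paths have the same image under $\Phi$ (up to the action of $g\in G$) if and only if they become equal after passing to a common refinement and twisting, which is the definition of being equal in $P_0$ (resp.\ related by a $P_1$-arrow).

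Alternatively, and perhaps more cleanly, I would phrase the whole argument through the bicalculus of fractions already set up in the excerpt: a generalized path is a generalized map $I \overset{\e}{\gets} I_{S_n}\overset{\alpha}{\to} G\ltimes X$, and I would show that, because $I$ is (honestly) connected and $G$ is discrete, every such generalized map admits a representative with all transition arrows trivial, and that composing with the essential equivalence $I_{S_n}\to I$ and using Remark~\ref{pronk} collapses the zig-zag to an ordinary equivariant map out of $I$. Tracking what this does to the mapping groupoid, one gets that $P$ is Morita equivalent to the groupoid whose objects are $\Map(I,X) = X^I$ and whose arrows are $G\times X^I$, which is exactly $G\ltimes X^I$. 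Either route reduces the theorem to two lemmas: (a) a ``straightening'' lemma that every generalized path is equivalent to one with trivial transition arrows, and (b) a bookkeeping lemma identifying the residual equivalence relation with the orbit/arrow relation of $G\ltimes X^I$.

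The main obstacle I anticipate is (a), the straightening step, handled with full attention to the topology rather than just at the level of a single path: one must choose the conjugating elements $g_i$ continuously (or rather, see that they are forced and locally constant) as the generalized path varies in the colimit $\coprod\Map(I_{S_n},G\ltimes X)/\!\sim$, and verify that the resulting map to $X^I$ is continuous and well-defined across the maps in the colimit system that compare different subdivisions $S_n$. In particular one has to be careful that different subdivisions of the same underlying path yield the same element of $X^I$, which again uses connectedness of the overlap intervals $\tilde I_i$ together with discreteness of $G$ to pin down the $k_i$ uniquely. Once the topology is under control, checking the two essential-equivalence conditions is largely formal.
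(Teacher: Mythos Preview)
The paper does not actually prove this theorem: it is quoted from \cite{AC} and stated without proof, so there is no argument in the present paper to compare your proposal against.

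That said, your outline is a reasonable reconstruction of how such a result is typically established, and matches the description of the equivalence relation on generalized paths given in the paper. Two small comments. First, your recursion for the straightening elements is slightly off: from the stated relation $k'_i=g_{i+1}k_ig_i^{-1}$ and the requirement $k'_i=e$ one gets $g_{i+1}=g_ik_i^{-1}$, not $g_{i+1}=k_ig_i$; this is cosmetic but worth fixing. Second, your identification of the ``main obstacle'' is well placed: the delicate point is indeed continuity of the straightening map on the colimit $P_0$, and compatibility across refinements of the subdivision. Discreteness of $G$ (so that the arrow component of a map $I_{S_n}\to G\ltimes X$ is locally constant on each connected overlap $\tilde I_i$) is exactly what makes the $k_i$, and hence the $g_i$, depend continuously---in fact locally constantly---on the generalized path, so your proposed Lemma~(a) goes through. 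Lemma~(b) then follows from the description of $P_1$ as $((\coprod G^n)/{\sim})\times P_0$: after straightening, only the first coordinate $g_1$ survives, giving the $G$-arrow in $G\ltimes X^I$. With those two lemmas in hand, essential surjectivity and full faithfulness of your $\Phi$ are, as you say, formal.
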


\subsection{grd-homotopy}
We use the previous characterization of the path groupoid to derive a notion of homotopy between generalized maps. All groups in this section are discrete.
\begin{defn}
Two generalized maps  $K\ltimes Y\overset{\sigma}{\gets}K'\ltimes Y'\overset{f}{\to}G\ltimes X$  and  $K\ltimes Y\overset{\tau}{\gets}K''\ltimes Y''\overset{g}{\to}G\ltimes X$ are {\em grd-homotopic} if there is a generalized map 
$K\ltimes Y\overset{\e}{\gets}\tilde K\ltimes \tilde Y\overset{H}{\to}G\ltimes X^I$ such that the following diagram commutes up to $2$-isomorphism:

$$
\xymatrix{ G\ltimes X&&
{G \ltimes X^I}\ar[rr]^{\ev_{1}}="0" \ar[ll]_{\ev_0 }="2"&&G\ltimes X\\
&{K'\ltimes Y'}\ar[ul]^{f}\ar[dr]_{\sigma}&{\tilde K\ltimes \tilde Y} \ar[u]_{H} \ar[d]^{\e}&{K''\ltimes Y''}\ar[ur]_{g}\ar[dl]^{\tau}&\\
&&{K\ltimes Y}&&
}
$$

\end{defn}
Notation $(\sigma,f)\simeq_{grd}(\tau, g)$. 

 Observe that if the generalized maps are equivariant maps ($\sigma=\tau=\id$) then the previous definition specializes to:
 \begin{remark}
$f\simeq_{grd} g$ if there exists a generalized map $(\e, H)$ and
essential equivalences $\eta$ and $\nu$ such that the following diagram commutes up to natural transformations.
$$
\xymatrix{ G\ltimes X&&
{G \ltimes X^I}\ar[rr]^{\ev_{1}} \ar[ll]_{\ev_0 }&&G\ltimes X\\
&{\bullet}\ar[r]^{\nu\;\;\;}&{\tilde K\ltimes \tilde Y} \ar[u]_{ H} \ar[d]^{\epsilon}&{\bullet}\ar[l]_{\;\;\;\eta}&\\
&&{K\ltimes Y}\ar@/^2.0pc/[uull]^{f}\ar@/_2.0pc/[uurr]_{g}&&
}
$$

\end{remark}

\section{Orbifolds}
We recall now the description of orbifolds as groupoids due to Moerdijk and Pronk \cite{Pronk1996, Pronk1997}. Orbifolds were first introduced by Satake \cite{S} as a generalization of a manifold defined in terms of local quotients. The groupoid approach provides a global language to reformulate the notion of orbifold.

 A groupoid $\Gd$ is {\it proper} if $(s,t):G_1\to G_0\times G_0$ is a proper map and it is a {\it foliation} groupoid if each isotropy group is discrete. 

\begin{defn}
An {\it orbifold} groupoid is a proper foliation groupoid.
\end{defn}

Given an orbifold groupoid $\Gd$, its orbit space $|\Gd|$ is a locally compact Hausdorff space. Given an arbitrary locally compact Hausdorff space $X$ we can equip it  with an orbifold structure as follows:

\begin{defn} An {\it orbifold structure} on a locally compact Hausdorff space $X$ is given by an orbifold groupoid $\Gd$ and a homeomorphism $h:|\Gd|\to X$.
\end{defn}

If $\e:\cH\to \Gd$ is an essential equivalence and $|\e|:|\cH|\to |\Gd|$ is the induced homeomorphism between orbit spaces, we say that the composition $h\circ|\e|:|\cH|\to X$ defines an {\it equivalent} orbifold structure. 

\begin{defn}  An {\it orbifold}  $\X$ 
is a space $X$ equipped with an equivalence class of orbifold 
structures. A specific such structure, given by 
$\Gd$ and  $h : |\Gd | \to X $ is
a {\it presentation} of the orbifold 
$\X$.
\end{defn}

If two groupoids are Morita equivalent, then they define the same orbifold. Therefore any structure or invariant for orbifolds, if defined through groupoids, should be invariant under Morita equivalence. 

\begin{defn}  An {\it orbifold map} $f\colon \Y\to \X$ is given by an equivalence class of generalized maps $(\e,\phi)$ from $\cK$ to $\Gd$ between presentations of the orbifolds such that the diagram commutes:

\[\xymatrix{
|\cK| \ar[r]^{|\phi||\e|^{-1}}\ar[d]& |\Gd| \ar[d]^{} \\ 
Y \ar[r]& X}\]
A specific such generalized map $(\e,\phi)$ is called a {\it presentation} of the orbifold map $f$.
\end{defn}

We can obtain an orbifold by considering the action of a compact group $G$ acting on a space $X$ with finite stabilizers. All orbifolds can be described in this way \cite{pardon}.

An orbifold $\X$ is {\em developable} is it is presented by a developable groupoid.

\begin{exam}[The $(p,q)$-action]

Consider $S^1\subset \mathbb C$ and $S^3=\{(z_1,z_2) \colon |z_1|^2+|z_2|^2=1\}\subset \mathbb C^2$. The 3-sphere $S^3$ can be decomposed by two solid tori $T_1=\{(z_1,z_2) \colon |z_1|\le \sqrt{2}/2\}$ and $T_2=\{(z_1,z_2) \colon |z_2|\le \sqrt{2}/2\}$.
Consider the action of  $S^1$ on $S^3$ given by $z (z_1, z_2) = (z^p z_1, z^q z_2)$, with $p,q\in  \mathbb Z$. The orbifold presented by the groupoid $S^1\ltimes S^3$ is called the {\em teardrop} orbifold if $p$ or $q$ are equal to 1 and the {\em football} orbifold otherwise.
If $gcd(p,q)=1$, the orbifold presented by $S^1\ltimes S^3$ is not developable. In this case, the isotropy group of $(z_1, 0)$ is $\mathbb Z_p$ and the one of $(0, z_2)$ is $\mathbb Z_q$. All other points have trivial isotropy.
If $p=q$, the orbifold presented by $S^1\ltimes S^3$ is Morita equivalent to $\mathbb Z_p\ltimes S^2$ and therefore it is developable. 
\end{exam}

\section{Connectedness}
We start in this section the comparative study of different invariants for group actions and for orbifolds. 

 \subsection{Connectedness for group actions}
\begin{defn}[$G$-connectedness]\cite{May2}
A $G$-space is {\em $G$-connected} if each of the $H$-fixed point spaces $X^H$ is non-empty and connected for every closed subgroup $H$ of
$G$.
\end{defn}

\begin{exam}[The $(p,q)$ action]
Consider the previous action of  $S^1$ on $S^3$ given by
$z (z_1, z_2) = (z^p z_1, z^q z_2)$. This space is not $S^1$-connected since the global fixed point set ${(S^3)}^{S^1}$ is empty for all values of $p,q\in  \mathbb Z$.
\end{exam}

\begin{remark}
$G$-connectedness is not invariant under Morita equivalence as is shown in the following example.
\end{remark}

\begin{exam}[]
Consider the previous action of $S^1$ on the solid torus $T_1$. There are no global fixed points. The core circle has isotropy $\Z_p$ and all the other points have trivial isotropy. That is, ${T_1}^{S^1}=\emptyset , {T_1}^{\Z_p}=S^1$. Hence $S^1\ltimes T_1$ is not $S^1$-connected.

Consider the action of $\Z_p$ on a disk $D$ by rotation. This action has a global fixed point $x=x_0$, $D^{\Z_p}=x_0$ which is connected. Therefore $\Z_p\ltimes D$ is  $\Z_p$-connected.

The groupoid $S^1\ltimes T_1$ is Morita equivalent to $\Z_p\ltimes D$, so $G$-connectedness is not an invariant.
\end{exam}

\subsection{Connectedness for orbifolds}

\begin{defn}[Groupoid connectedness] A groupoid $G\ltimes X$ is {\em connected} if there is a generalized path $I\overset{\e}{\gets}I'\overset{\alpha}{\to}G\ltimes X$ from any object to any other in $X$.
    \end{defn}  
    
    \begin{exam}[The $(p,q)$-action]
We will show that the groupoid $S^1\ltimes S^3$ given by the $(p,q)$-action when $p=q$,  is connected.  Let $D_i$ be a transversal disk to the orbits in each of the solid tori, we have that the transveral $D=D_1\cup D_2$ is connected. Given $(z_1, z_2)$ and $(z'_1, z'_2)$ in $S^3$ consider a point in the intersection of their orbits with the transversal: $x_1\in S^1{(z_1, z_2)}\cap D$ and $x_2\in S^1{(z'_1, z'_2)}\cap D$. Then there exists $z, z'\in S^1$ such that $z (z_1, z_2)=x_1$ and $z' (z'_1, z'_2)=x_2$. Consider a path $\sigma$ in $D$ from $x_1$ to $x_2$. We have that $\alpha=(z,\sigma, z')$ is a generalized path from $(z_1, z_2)$ and $(z'_1, z'_2)$.

Then the groupoid $S^1\ltimes S^3$ is connected as a groupoid whereas $S^3$ is not $S^1$-connected considered as an $S^1$-space. 
\end{exam}

The following theorem is attributed to Montgomery and Yang in \cite{Bredon}.
\begin{thm}[Chapter II, Theorem 6.2]\label{MY}
Let $X$ be a $G$-space, $G$ compact Lie, and let $f: I \rightarrow X/G$ be any path. Then there exists a lifting $f': I \rightarrow X$, $\pi_X  \circ f' =f$.
\end{thm}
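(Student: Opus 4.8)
The plan is to prove the path-lifting theorem of Montgomery--Yang by reducing the existence of a global lift to a local problem and then patching local lifts together using compactness of $I$ together with the slice theorem for compact Lie group actions. First I would fix a path $f\co I\to X/G$ and recall that since $G$ is a compact Lie group acting on a (Hausdorff, and here tacitly completely regular) space $X$, every point $\bar x=\pi_X(x)\in X/G$ admits a slice: there is a $G_x$-invariant slice $S_x\ni x$ in $X$ such that the saturation $GS_x$ is an open $G$-invariant tube around the orbit $Gx$, $G$-homeomorphic to $G\times_{G_x}S_x$, and the induced map $GS_x/G\cong S_x/G_x$ is an open neighborhood of $\bar x$ in $X/G$. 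The point of the slice is that over the tube $GS_x$ the orbit projection has a continuous local section: the composite $S_x\hookrightarrow G\times_{G_x}S_x\cong GS_x$ followed by $\pi_X$ identifies $S_x/G_x$ with $GS_x/G$, and one may choose a continuous section $GS_x/G\to S_x\subseteq GS_x$ of $\pi_X$ after possibly shrinking (this uses that $S_x/G_x$, being an orbit space of a compact group action on a slice, is metrizable/locally nice, or more simply one only needs a section over a small metric ball, which always exists by the cone-like local structure; in the smooth case the slice is a linear $G_x$-representation and the section is obvious). So: every point of $X/G$ has an open neighborhood over which $\pi_X$ admits a continuous section.

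Next I would use compactness of $I$ to extract a finite cover adapted to $f$. Pull back the slice neighborhoods along $f$ to get an open cover of $I$; by the Lebesgue number lemma choose a subdivision $0=t_0<t_1<\cdots<t_n=1$ such that each subinterval $[t_{i-1},t_i]$ is mapped by $f$ into one neighborhood $U_i$ over which there is a continuous section $\sigma_i\co U_i\to X$ of $\pi_X$. Then $\sigma_i\circ f|_{[t_{i-1},t_i]}$ is a continuous lift of $f$ on the $i$-th subinterval, but the lifts from adjacent subintervals will in general disagree at the breakpoint $t_i$: one has $\sigma_i(f(t_i))$ and $\sigma_{i+1}(f(t_i))$ lying in the same orbit, hence related by some $g_{i+1}\in G$. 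The patching step is then to correct inductively: having built a continuous lift $f'$ on $[0,t_i]$, note $\pi_X(f'(t_i))=f(t_i)=\pi_X(\sigma_{i+1}(f(t_i)))$, so there is $g\in G$ with $g\cdot\sigma_{i+1}(f(t_i))=f'(t_i)$; define $f'$ on $[t_i,t_{i+1}]$ to be $g\cdot\bigl(\sigma_{i+1}\circ f\bigr)$, which is continuous, lifts $f$, and agrees with the previous piece at $t_i$. By the gluing lemma for finitely many closed subintervals the result is a continuous $f'\co I\to X$ with $\pi_X\circ f'=f$.

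The main obstacle is the very first step: producing continuous \emph{local sections} of the orbit map $\pi_X\co X\to X/G$ near an arbitrary point. For a general (non-smooth, non-locally-compact) $G$-space this can fail, so the argument genuinely needs the standing hypotheses available here -- $X$ Hausdorff and $G$ compact Lie, under which the slice theorem (Montgomery--Yang, Mostow, Palais; see Bredon, Chapter~II) supplies tubes $G\times_{G_x}S_x$ and hence reduces the local section question to finding a section of $S_x\to S_x/G_x$ near the cone point, which holds because the slice has a $G_x$-CW or cone-like structure. One must be slightly careful that the section need only exist over a possibly smaller neighborhood of $\bar x$ than the full tube image -- that is harmless since we immediately feed the cover into the Lebesgue-number argument. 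Everything else (the subdivision, the inductive $G$-translation correction, the finite gluing) is routine, and the uniqueness of lifts up to the $G$-action is automatic from $\pi_X$ being the orbit map. I would remark at the end that this is exactly the lifting property that will be used to turn generalized (orbifold) paths in $G\ltimes X$ into honest equivariant paths, which is the bridge needed in the comparison of the $G$-category with the orbifold LS category.
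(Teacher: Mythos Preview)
The paper does not prove this statement; Theorem~\ref{MY} is quoted from Bredon (Chapter~II, Theorem~6.2) and invoked as a black box in the proof that follows, so there is no argument in the paper to compare yours against.

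Independently of that, your proposal has a genuine gap precisely at the step you flag as ``the main obstacle.'' The orbit map $\pi_X\co X\to X/G$ need \emph{not} admit continuous local sections, even in the smooth case with a linear slice. Take $G_x=\Z/3$ acting on $S_x=\C$ by rotation through $2\pi/3$: up to homeomorphism the quotient map is $z\mapsto z^3$, and a continuous section over any neighbourhood of the origin would restrict to a continuous cube root on a small punctured disk, which does not exist. So the claim ``in the smooth case the slice is a linear $G_x$-representation and the section is obvious'' is false, and the construction $f'|_{[t_{i-1},t_i]}=\sigma_i\circ f$ cannot even be set up. Bredon's argument does not try to section $\pi_X$; it uses the tube $G\times_{G_x}S_x$ to reduce to lifting a path in $S_x/G_x$ to $S_x$ and handles that by an induction (on $\dim G$, and for the linear slice via its cone structure) rather than by producing a section. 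If you wish to rescue your outline you must replace ``local section of $\pi_X$'' by ``local path-lifting for $\pi_X$,'' but proving the latter inside a tube is already essentially the whole theorem for the smaller group $G_x$.
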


We will prove that groupoid connectedness is the same as connectedness of the quotient space.

\begin{thm} If $G$ is a compact Lie group, then the groupoid $G\ltimes X$ is connected if and only if $X/G$ is connected.
\end{thm}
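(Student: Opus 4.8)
The plan is to prove both implications separately, using Theorem~\ref{MY} (the Montgomery--Yang path lifting) as the main engine for the harder direction.

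\medskip

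\noindent\textbf{Forward direction ($G\ltimes X$ connected $\Rightarrow$ $X/G$ connected).} First I would observe that a generalized path $I\overset{\e}{\gets}I_{S_n}\overset{\alpha}{\to}G\ltimes X$ of the form $\alpha=(\alpha_1,k_1,\alpha_2,\ldots,k_{n-1},\alpha_n)$ descends to an honest continuous path in the orbit space $X/G$. Indeed, each $\alpha_i\co I_i\to X$ composed with the quotient map $\pi_X\co X\to X/G$ gives a path $\pi_X\circ\alpha_i$, and the gluing condition $k_i\alpha_i(r)=\alpha_{i+1}(r)$ for $r\in\tilde I_i$ exactly says that $\pi_X\circ\alpha_i$ and $\pi_X\circ\alpha_{i+1}$ agree on the overlap $\tilde I_i$, since points in the same $G$-orbit have the same image in $X/G$. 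Hence the pieces patch to a single continuous map $I\to X/G$. If the generalized path runs from $x$ to $y$ in $X$, the resulting path in $X/G$ runs from $\pi_X(x)$ to $\pi_X(y)$. Since groupoid connectedness supplies such a path between any two objects, $\pi_X$ is surjective and every pair of points of $X/G$ is joined by a path, so $X/G$ is path-connected, hence connected.

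\medskip

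\noindent\textbf{Reverse direction ($X/G$ connected $\Rightarrow$ $G\ltimes X$ connected).} This is where I expect the real work, and it is where the compact Lie hypothesis enters. Given $x,y\in X$, I want a generalized path from $x$ to $y$. Since $X/G$ is connected and locally path-connected in the relevant situations — more robustly, using that for $G$ compact Lie the orbit space of a (say, completely regular) $G$-space is again reasonable — I would first pass from ``connected'' to ``path-connected'' for $X/G$; if one does not want to assume local path-connectedness, one can instead invoke that the quotient map is open and reduce to the path-connected case, or simply add path-connectedness as the working hypothesis as is standard for LS-type statements. Then choose a path $f\co I\to X/G$ with $f(0)=\pi_X(x)$ and $f(1)=\pi_X(y)$. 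By Theorem~\ref{MY} there is a lift $f'\co I\to X$ with $\pi_X\circ f'=f$. Now $f'$ is \emph{not} generally a path from $x$ to $y$ on the nose: we only know $\pi_X(f'(0))=\pi_X(x)$ and $\pi_X(f'(1))=\pi_X(y)$, i.e.\ $f'(0)=g_0 x$ and $f'(1)=g_1 y$ for some $g_0,g_1\in G$. The fix is to read $f'$ as a generalized path and then prepend/append group elements: concretely, take the subdivision $S_1=\{0,1\}$ with the single piece $\alpha_1=f'$, or better take $S_3$ with middle piece $f'$ and correct the endpoints, producing the generalized path $\alpha=(g_0,\,f',\,g_1^{-1})$ in the notation $\alpha=(\alpha_1,k_1,\alpha_2,\ldots)$ of Section~2. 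One checks the gluing conditions $k_i\alpha_i(r)=\alpha_{i+1}(r)$ hold at the two subdivision points by construction, so $\alpha$ is a bona fide generalized path from $x$ to $y$ in $G\ltimes X$. Hence $G\ltimes X$ is connected.

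\medskip

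\noindent\textbf{Main obstacle.} The only substantive point is the reverse direction, and within it the potential gap between ``$X/G$ connected'' and ``$X/G$ path-connected'': Theorem~\ref{MY} lifts \emph{paths}, so I need a path downstairs before I can lift. I would handle this by noting that orbit spaces of $G$-CW complexes (or, more generally, of the spaces relevant to the orbifold setting) are CW, hence connected $\Leftrightarrow$ path-connected; alternatively one restricts the statement to the class of $G$-spaces under consideration where this equivalence is automatic. A secondary, purely bookkeeping obstacle is making sure the endpoint-correcting group elements $g_0,g_1$ are incorporated into the generalized-path formalism with the right subdivision and that the equivalence relation on generalized paths is respected; this is routine given the explicit description of generalized paths and the notation $\alpha=(\alpha_1,k_1,\alpha_1,\ldots,k_{n-1},\alpha_n)$ recalled in Section~2.
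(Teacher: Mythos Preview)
Your proof is correct and follows essentially the same approach as the paper: push a generalized path down to the quotient for the forward direction, and for the reverse lift a path in $X/G$ via Montgomery--Yang (Theorem~\ref{MY}) and then correct the endpoints with group elements to obtain a generalized path in $G\ltimes X$. You are in fact more careful than the paper about the connected-versus-path-connected distinction; the paper's own proof tacitly works with path-connectedness throughout without addressing this point.
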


\begin{proof}
The quotient map $X \rightarrow X/G$ is surjective and a generalized path gives an honest path in the quotient space, therefore if any two points in $X$ can be connected by a generalized path then the quotient space is path connected. 

Now if the quotient space is path connected, take any two points in $X$, consider the image in the quotient space, by hypothesis there is a path in the quotient space joining the images, by the lifting theorem of Montgomery and Yang \ref{MY} there is a lift of this path. This lift does not necessarily start and end at the two points, but at elements in their orbits. From this we can construct a generalized path between the two original points.

\end{proof}

\begin{cor}
Connectedness is invariant under Morita equivalence. 
\end{cor}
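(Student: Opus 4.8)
The plan is to deduce the corollary directly from the preceding theorem together with the fact, established earlier, that groupoid connectedness for a translation groupoid $G\ltimes X$ coincides with connectedness of the quotient space $X/G$ when $G$ is a compact Lie group. The key observation is that Morita equivalence of translation groupoids preserves the homeomorphism type of the orbit space: if $G\ltimes X \sim_M H\ltimes Y$, then $X/G \cong Y/H$. Once this is in hand, the corollary is immediate: connectedness of $X/G$ is a topological property of the orbit space, and hence is shared by any Morita equivalent translation groupoid.

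First I would recall the definition of Morita equivalence: $G\ltimes X \sim_M H\ltimes Y$ means there is a groupoid $K\ltimes Z$ with essential equivalences $K\ltimes Z \to G\ltimes X$ and $K\ltimes Z \to H\ltimes Y$. So it suffices to show that an essential equivalence $m\ltimes \e \colon K\ltimes Z \to H\ltimes Y$ induces a homeomorphism $Z/K \to Y/H$ on orbit spaces. This is standard, but I would sketch it using the Pronk--Scull factorization from Remark~\ref{pronk}: any essential equivalence factors as a quotient map $K\ltimes Z \to K/L\ltimes Z/L$ (with $L$ normal in $K$ acting freely on $Z$) followed by an inclusion map $K/L \ltimes Z/L \to H\ltimes(H\times_{K/L}(Z/L))$. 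For the quotient map, the orbit spaces agree on the nose, $Z/K = (Z/L)/(K/L)$. For the inclusion map $K'\ltimes W \to G\ltimes(G\times_{K'}W)$, one checks that $W \hookrightarrow G\times_{K'}W$, $w\mapsto [e,w]$, induces a continuous bijection $W/K' \to (G\times_{K'}W)/G$ whose inverse is induced by $[g,w]\mapsto gw$ followed by the quotient; properness/openness of the relevant maps makes this a homeomorphism. Composing, $m\ltimes\e$ induces a homeomorphism on orbit spaces.

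With this lemma, the proof of the corollary runs: suppose $G\ltimes X \sim_M H\ltimes Y$ with $G$ and $H$ compact Lie. By the previous theorem, $G\ltimes X$ is connected iff $X/G$ is connected, and $H\ltimes Y$ is connected iff $Y/H$ is connected. Since $X/G \cong Y/H$ by the lemma, the two conditions are equivalent, so $G\ltimes X$ is connected iff $H\ltimes Y$ is connected. Hence groupoid connectedness is a Morita invariant.

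The main obstacle is the hypothesis: the previous theorem requires the group to be \emph{compact Lie}, whereas Morita equivalence is a priori an equivalence within the larger class of all translation groupoids (or even all orbifold groupoids). So strictly the corollary should be read as: among translation groupoids of compact Lie groups, connectedness is a Morita invariant — or, more usefully, since any proper groupoid / orbifold groupoid is Morita equivalent to $G\ltimes X$ for a compact Lie $G$ (by the slice theorem / Pardon's result cited in the introduction), every presentation can be taken of this form, and then the orbit-space homeomorphism argument applies. I would therefore phrase the proof so that the only real content beyond the previous theorem is the (routine but not entirely trivial) fact that essential equivalences induce homeomorphisms of orbit spaces, and flag that the compact-Lie hypothesis is what lets us invoke the lifting theorem of Montgomery and Yang through the previous theorem.
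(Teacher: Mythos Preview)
Your approach is correct and is exactly what the paper intends: the corollary is stated without proof because it is immediate from the preceding theorem together with the fact---already recorded in Section~3 on orbifolds---that an essential equivalence induces a homeomorphism $|\cH|\to|\Gd|$ on orbit spaces, so Morita equivalent groupoids have homeomorphic orbit spaces. One small slip in your sketch of that fact: for the inclusion $K'\ltimes W \to G\ltimes(G\times_{K'}W)$ the inverse on orbit spaces is induced by $[g,w]\mapsto w$ (then pass to $W/K'$), not by $[g,w]\mapsto gw$, since $G$ does not act on $W$.
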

Therefore, connectedness is well defined for orbifolds.

\begin{defn}
An orbifold $\X$ is {\em connected} if it is presented by a connected groupoid.
\end{defn}

\begin{exam}[The $(p,q)$-action]
The football orbifold  when $p=q$,  is connected.  \end{exam}

\section{Generalized maps between G-spaces}
In \cite{MM}, Moerdijk and Mr\v{c}un give a description of the correspondence between isomorphism classes of generalized maps and Hilsum-Skandalis maps.
We present in this section the theory of Hilsum-Skandalis maps \cite{M} adapted to the case of translation groupoids. This correspondence will allow us to establish a bridge between the orbifold world and the equivariant one by stating an explicit condition that determines when an orbifold map correlates to a $G$-map. 

\subsection{Right and left actions of translation groupoids on spaces}
Let $G\ltimes X$ be a topological groupoid and let
$E$ be a topological space. A {\em left $(G\ltimes X)$-action}
on $E$ is a pair $(\mu,p)$, where $p:E\to X$ and 
$\mu:G\times E \to E$ are continuous maps satisfying:
\begin{enumerate}
\item    $p(g\cdot e)=gx$, 
\item   $1\cdot e=e$
\item $g\cdot (g'\cdot e)= gg' \cdot e$
\end{enumerate}

for any $e\in E$ and $g,g'\in G$ with $p(e)=x$. 

We have a new translation groupoid $G\ltimes E$ where source and target $G \times E \rightrightarrows E$ are given by the $(G\ltimes X)$-action: $s(g,e)=e$ and $t(g,e)=g\cdot e$.

Analogously, given a  topological groupoid $H\ltimes Y$, we can define a {\em right $(H\ltimes Y)$-action}
on $E$ as a pair $(\nu,w)$, where $w:E\to Y$ and 
$\nu:E\times H \to E$ are continuous maps satisfying analogous conditions.

In this case, we have a translation groupoid $E\rtimes H$ where source and target are given by the $(H\ltimes Y)$-action: $s(e,h)=e\cdot h$ and $t(e,h)=e$.

\subsection{Principal $(G\ltimes X)(H\ltimes Y)$-bibundles}
A bibundle is a topological space $E$, 
$$\begin{tikzcd}
    &E \arrow{dr}{p}  \arrow[swap]{dl}{w} &  \\
     Y&& X
  \end{tikzcd}
$$
equipped with a left $(G\ltimes X)$-action
$(\mu,p)$ and a right $(H\ltimes Y)$-action $(\nu,w)$ such that the groupoids act along the fibers: $p(eh)=p(e)$ and $w(ge)=w(e)$.

The $(G\ltimes X)(H\ltimes Y)$-bibundle is {\em principal} if $w:E\to Y$ admits local sections and the map $(\mu, p_2):G\times E \to E\times_Y E$ is a homeomorphism.

The double translation groupoid $G\ltimes E\rtimes H$ is given by $G \times E \times H \rightrightarrows E$ where  $s(g,e,h)=e\cdot h$ and $t(g,e,h)=geh^{-1}$.

\subsection{Hilsum-Skandalis maps}
Two principal $(G\ltimes X)(H\ltimes Y)$-bibundles $(E,w,p)$ and $(E',w',p')$ are called {\em isomorphic} if there
exists a homeomorphism $E\to E'$ that intertwines the maps $w, p$ with the maps $w', p'$.

A {\em Hilsum-Skandalis map} from $H\ltimes Y $ to $G\ltimes X$ is an isomorphism class of 
principal $(G\ltimes X)(H\ltimes Y)$-bibundles.  (We will not make any distinction in notation between the principal bibundles and their isomorphism classes - the associated Hilsum-Skandalis maps.)

The {\em composition} of Hilsum-Skandalis maps is given by:
 
$$\begin{tikzcd}
    &E \arrow{dr}{p}  \arrow[swap]{dl}{w} &  \\
     Y&& X
  \end{tikzcd}
  \circ
\begin{tikzcd}
    &E' \arrow{dr}{p'}  \arrow[swap]{dl}{w'} &  \\
     Z&& Y
  \end{tikzcd}
  =
\begin{tikzcd}
    &E^* \arrow{dr}{p^*}  \arrow[swap]{dl}{w^*} &  \\
     Z&& X
  \end{tikzcd}
$$

where $E^*$ is the orbit space of the following right $(H\ltimes Y)$-action on $E\times_YE'$:

$$w''=wp_1: E\times_YE \to Y, (e,e')\mapsto w(e)$$ 
$$\nu'': E\times_YE\times H \to E\times_YE, ((e,e'), h)\mapsto (eh, h^{-1}e').$$

The maps $w^*:E^* \to Z$ and $p^*: E^*\to X$ are given by $w^*([e,e'])=w'(e')$ and 
$p^*([e,e'])=p(e)$.

The left $(G\ltimes X)$-action is given by $\mu^*(g,[e,e'])=[ge,e']$ and the right 
$(K\ltimes Z)$-action is given by $\nu^*([e,e'], k)=[e,e'k]$.

\subsection{From Hilsum-Skandalis maps to generalized maps}
Given the Hilsum-Skandalis map 
$$\begin{tikzcd}
    &E \arrow{dr}{p}  \arrow[swap]{dl}{w} &  \\
     Y&& X
  \end{tikzcd}
$$
the associated generalized map is 
$$H\ltimes Y\overset{\mathfrak{w}}{\gets}G\ltimes E\rtimes H\overset{\mathfrak{p}}{\rightarrow}G\ltimes X$$
where $G\ltimes E\rtimes H$ is given by $G \times E \times H \rightrightarrows E$ with $s(g,e,h)=e$ and $t(g,e,h)=geh^{-1}$. The maps $\mathfrak{w}$ and $\mathfrak{p}$ are given by $\mathfrak{p}_0=p$ and $\mathfrak{p}_1(g,e,h)=(g,p(e))$ and $\mathfrak{w}_0=w$ and $\mathfrak{w}_1(g,e,h)=(h,h^{-1}w(e))$.
\subsection{From generalized maps to Hilsum-Skandalis maps}
Given an equivariant map $f\ltimes \phi : K\ltimes Z \to G\ltimes X$ we associate the Hilsum-Skandalis map $\langle\phi\rangle$ defined by:
$$\begin{tikzcd}
    &G\times Z \arrow{dr}{p}  \arrow[swap]{dl}{w} &  \\
     Z&& X
  \end{tikzcd}
$$
with $w(g,z)=z$ and $p(g,z)=g\phi(z)$. The actions are given by $\mu(g',g,z)=(gg',z)$ and $\nu(g,z,k)=(gf(k), k^{-1}z)$.

Given the essential equivalence $\sigma\ltimes \epsilon : K\ltimes Z \to H\ltimes Y$, there is a Hilsum-Skandalis map $\langle\epsilon\rangle^{-1}$ such that the composition $\langle\epsilon\rangle \circ \langle\epsilon\rangle^{-1}$ is the identity.
The map $\langle\epsilon\rangle^{-1}$ is defined by:
$$\begin{tikzcd}
    &H\times Z \arrow{dr}{p'}  \arrow[swap]{dl}{w'} &  \\
     Y&& Z
  \end{tikzcd}
$$
with $w'(h,z)=h\epsilon(z)$ and $p'(h,z)=z$. The actions are given by $\mu'(k,h,z)=(h\sigma(k^{-1}),kz)$ and $\nu'(h,z,h')=({h'}^{-1}h,z)$.

Given a generalized map $$H\ltimes Y\overset{\sigma\ltimes \epsilon}{\gets}K\ltimes Z\overset{f\ltimes \phi}{\rightarrow}G\ltimes X$$ the associated Hilsum-Skandalis map will be given by the composition $\langle\phi\rangle \circ \langle\epsilon\rangle^{-1}$:
$$\begin{tikzcd}
    &^{G\times H\times Z}\!/\!_{K\ltimes Z} \arrow{dr}{p''}  \arrow[swap]{dl}{w''} &  \\
     Y&& X
  \end{tikzcd}
$$
where $w''([g,h,z])=h\epsilon(z)$ and $p''([g,h,z])=g\phi(z)$. The classes of equivalence for the right $(K\ltimes Z)$-action on $G\times H\times Z$ are given by:
$(g,h,z)\sim (gf(k),h\sigma(k),k^{-1}z)$ for all $k\in K$.

The left $(G\ltimes X)$-action is given by $\mu''(g',[g,h,z])=[g'g,h,z]$ and the right 
$(H\ltimes Y)$-action is given by $\nu''([g,h,z], h')=[g,(h')^{-1}h,z]$.

\subsection{Characterization of equivariant maps}
We will show in this section that all generalized maps of a certain type are equivalent to a $G$-map.

\begin{prop} \label{strict} The generalized map $H\ltimes Y\overset{\sigma\ltimes \epsilon}{\gets}K\ltimes Z\overset{f\ltimes \phi}{\rightarrow}G\ltimes X$ is equivalent to an equivariant map $H\ltimes Y\overset{}{\rightarrow}G\ltimes X$ if and only if the map $w: {G\times H\times Z}\!/\!_{K\ltimes Z} \to Y$ with $w([g,h,z])=h\epsilon(z)$ has a global section.
\end{prop}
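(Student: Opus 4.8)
\textit{Proof proposal.} The plan is to pass, via the correspondence of this section between generalized maps and Hilsum--Skandalis maps, to the associated principal bibundles and argue there. Recall that under the assignment ``generalized map $\mapsto$ Hilsum--Skandalis map'' equivalent generalized maps correspond to isomorphic principal bibundles, and the bibundle attached to $H\ltimes Y\overset{\sigma\ltimes\epsilon}{\gets}K\ltimes Z\overset{f\ltimes\phi}{\to}G\ltimes X$ is $E:=(G\times H\times Z)/(K\ltimes Z)$ with $w([g,h,z])=h\epsilon(z)$, $p([g,h,z])=g\phi(z)$, left $G$-action $g'\cdot[g,h,z]=[g'g,h,z]$ and right $H$-action $[g,h,z]\cdot h'=[g,(h')^{-1}h,z]$. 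On the other hand, an equivariant map $\rho\ltimes r\colon H\ltimes Y\to G\ltimes X$ has associated bibundle $\langle r\rangle$ with underlying space $G\times Y$, $w(g,y)=y$ and $p(g,y)=g\,r(y)$; here $w$ is a trivial projection and admits the global section $y\mapsto(1,y)$. Since equivalence of generalized maps is isomorphism of the associated bibundles, the statement reduces to: $E$ is isomorphic, as a principal $(G\ltimes X)(H\ltimes Y)$-bibundle, to $\langle r\rangle$ for some equivariant map $\rho\ltimes r$ if and only if $w\colon E\to Y$ has a global section. One direction is then immediate, since a bibundle isomorphism $\langle r\rangle\to E$ carries the section $y\mapsto(1,y)$ of the projection to a global section of $w$.

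For the converse, suppose $s\colon Y\to E$ satisfies $w\circ s=\id_Y$. Because $E$ is a principal bibundle, the left $G$-action is free and $w$ exhibits $E$ as a principal $G$-bundle over $Y$ with continuous division map $E\times_Y E\to G$; hence $\Phi\colon G\times Y\to E$, $\Phi(g,y)=g\cdot s(y)$, is a homeomorphism. Put $r(y):=p(s(y))$. Equivariance of $p$ for the left $G$-action gives $p(\Phi(g,y))=g\cdot r(y)$, so $\Phi$ intertwines $p$ with the right moment map of $\langle r\rangle$, and it clearly intertwines the left $(G\ltimes X)$-action and $w$. The content is in the right $H$-action: for $h\in H$ both $s(y)\cdot h$ and $s(h^{-1}y)$ lie over $h^{-1}y=w(s(y)\cdot h)$, so there is a unique $\rho(y,h)\in G$ with $s(y)\cdot h=\rho(y,h)\cdot s(h^{-1}y)$, and $\Phi$ transports the right $H$-action to $(g,y)\cdot h=(g\,\rho(y,h),\,h^{-1}y)$. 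Using the explicit form of $E$ together with the defining properties of the essential equivalence $\sigma\ltimes\epsilon$ --- full faithfulness, i.e.\ the bijection between the arrow sets $\{k\colon z'=kz\}$ and $\{h\colon\epsilon(z')=h\epsilon(z)\}$ given by $\sigma$, and essential surjectivity --- one computes $\rho(y,h)$ in terms of $f$ and checks that it does not depend on $y$; write $\rho(h)$ for the common value. Associativity of the action then forces $\rho\colon H\to G$ to be a continuous homomorphism, and $p(e\cdot h)=p(e)$ becomes $r(hy)=\rho(h)\,r(y)$. Thus $\rho\ltimes r$ is an equivariant map $H\ltimes Y\to G\ltimes X$, $\Phi\colon\langle r\rangle\to E$ is a bibundle isomorphism, and by the correspondence the generalized map is equivalent to $\rho\ltimes r$.

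I expect the main obstacle to be exactly the assertion that $\rho(y,h)$ is independent of $y$. A global section always trivializes $w$ as a $G$-bundle, but a priori the lift of the $H$-action along $\Phi$ is only a $G$-valued cocycle on $H\ltimes Y$; it is the particular shape of $E$ --- coming from a presentation whose right leg is an equivariant map, so that the ``$G$-coordinate'' of $E$ is assembled from the homomorphism $f$ alone --- together with the full faithfulness and essential surjectivity of $\sigma\ltimes\epsilon$ that collapses this cocycle to an honest homomorphism. I would isolate this as a separate lemma about the bibundle $E$ and prove it by the bookkeeping indicated above.
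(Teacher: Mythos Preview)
Your overall strategy coincides with the paper's: its proof is a single sentence invoking Proposition~II.1.5 of \cite{M} together with the correspondence between generalized maps and Hilsum--Skandalis maps set up earlier in the section, and you are essentially unpacking that argument.

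The gap is precisely where you flag it, but the ``bookkeeping'' you propose will not close it. For an \emph{arbitrary} section $s$ the induced $\rho(y,h)$ need not be independent of $y$. Already in the degenerate case $K=H$, $Z=Y$, $\sigma=\id$, $\epsilon=\id$ (so $E\cong G\times Y$ with right action $(g,y)\cdot h=(gf(h),h^{-1}y)$), the section $s_\lambda(y)=(\lambda(y),y)$ yields $\rho(y,h)=\lambda(y)\,f(h)\,\lambda(h^{-1}y)^{-1}$, which for nonconstant $\lambda$ and nonabelian $G$ genuinely depends on $y$. Replacing one section by another changes $\rho$ by exactly such a coboundary, so no feature of the ``particular shape of $E$'' can force every section to produce a homomorphism. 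What Mr\v{c}un's II.1.5 gives, in the generality of topological groupoids, is that a global section of $w$ makes the bibundle isomorphic to one induced by a \emph{strict functor}; for translation groupoids a strict functor $H\ltimes Y\to G\ltimes X$ is a map on objects together with a cocycle $H\times Y\to G$, not a priori a group homomorphism $H\to G$. The further passage from ``strict functor'' to ``equivariant map'' in the paper's sense is an additional step that your sketch does not supply (and that the paper's one-line proof does not spell out either). Observe that in the subsequent Theorem~\ref{section} the paper does not use Proposition~\ref{strict} as a black box: it exhibits a \emph{specific} section under the natural-conjugacy hypothesis and then verifies directly that the resulting strict map is a $G$-map; that explicit computation, not a section-independent claim, is what carries the later applications.
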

\proof The result follows from proposition II.1.5 in \cite{M} and the previous correspondence between generalized maps and Hilsum-Skandalis maps for translation groupoids.

\begin{defn} If $K$ acts on $Z$, we say that two homomorphisms $m: K \rightarrow G$ and $n: K \rightarrow G$ are \textit{naturally conjugate}, if
there exists a continuous function, $\lambda :  Z \rightarrow G$ such that for every $k \in K,z \in Z$ we have  $n(k) =\lambda(  k z )m(k)\lambda (  z )^{-1}$. In particular if $\lambda$ is constant we have that there exists $\gamma\in G$ such that $n(k)=\gamma m(k) \gamma^{-1}$.
\end{defn}

\begin{thm}\label{section}
Any generalized map $G\ltimes Y\overset{m\ltimes \epsilon}{\gets}K\ltimes Z\overset{n\ltimes \phi}{\rightarrow}G\ltimes X$ with $m$ and $n$ \textit{naturally conjugate} is equivalent to a $G$-map by a natural transformation.
\end{thm}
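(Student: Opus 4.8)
The plan is to first use the natural conjugacy to replace $\phi$ by a map equivariant for the \emph{same} homomorphism as $\epsilon$, and then build the desired $G$-map directly from the two axioms defining the essential equivalence $m\ltimes\epsilon$.

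For the reduction, let $\lambda\co Z\to G$ be continuous with $n(k)=\lambda(kz)\,m(k)\,\lambda(z)^{-1}$ for all $k\in K$, $z\in Z$, and put $\psi\co Z\to X$, $\psi(z)=\lambda(z)^{-1}\phi(z)$. Using $\phi(kz)=n(k)\phi(z)$ together with the conjugacy relation one computes $\psi(kz)=m(k)\psi(z)$, so $m\ltimes\psi\co K\ltimes Z\to G\ltimes X$ is an equivariant map. Moreover $\lambda(z)\psi(z)=\phi(z)$ and the identity $n(k)\lambda(z)=\lambda(kz)m(k)$ is exactly the conjugacy hypothesis, so $\lambda$ is a natural transformation $m\ltimes\psi\sim n\ltimes\phi$. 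Hence the given generalized map is $2$-isomorphic, through $\lambda$, to $G\ltimes Y\overset{m\ltimes\epsilon}{\gets}K\ltimes Z\overset{m\ltimes\psi}{\to}G\ltimes X$, and it suffices to treat the latter; so we may assume $n=m$.

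Now consider $w'\co G\times Z\to Y$, $w'(h,z)=h\,\epsilon(z)$. Essential surjectivity of $m\ltimes\epsilon$ says precisely that $w'$ is an open surjection, hence a quotient map, and fully faithfulness says that $h\,\epsilon(z)=h'\,\epsilon(z')$ forces a unique $k\in K$ with $m(k)=h'^{-1}h$ and $kz=z'$. Define $r\co Y\to X$ by $r(h\,\epsilon(z))=h\,\psi(z)$. This is well defined: in the situation above $h'\psi(z')=h'\psi(kz)=h'm(k)\psi(z)=h\,\psi(z)$. It is continuous because $w'$ is a quotient map and $(h,z)\mapsto h\,\psi(z)$ is continuous and constant on the fibres of $w'$, and it is $G$-equivariant by its defining formula. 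Thus $\id_G\ltimes r\co G\ltimes Y\to G\ltimes X$ is a $G$-map with $(\id_G\ltimes r)\circ(m\ltimes\epsilon)=m\ltimes(r\circ\epsilon)=m\ltimes\psi$. Taking $\cL=K\ltimes Z$ with the essential equivalences $\id_{K\ltimes Z}$ into $K\ltimes Z$ and $m\ltimes\epsilon$ into $G\ltimes Y$, both triangles comparing $(m\ltimes\epsilon,m\ltimes\psi)$ with $\id_G\ltimes r$ commute on the nose, so these represent the same $1$-morphism; combining with the reduction, the original generalized map is equivalent to $\id_G\ltimes r$, the only non-identity $2$-cell being $\lambda$. (Alternatively one may invoke Proposition \ref{strict}: when $n=m$ the bundle there is $G\times\bigl((G\times Z)/K\bigr)$ with $w$ the composite of the projection and $[h,z]\mapsto h\epsilon(z)$, and the two axioms make the latter a homeomorphism $(G\times Z)/K\xrightarrow{\ \sim\ }Y$, so $w$ has a global section.)

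The geometric content — the well-definedness and continuity of $r$ — is short and forced by the essential-equivalence axioms. I expect the only genuine work to be bookkeeping: verifying that the $2$-isomorphisms assembled in the two steps really fit the definition of $2$-morphism in the Morita bicategory of translation groupoids, so that the conclusion is literally "equivalent to a $G$-map by a natural transformation," and that the reduction step transports this structure correctly.
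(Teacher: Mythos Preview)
Your argument is correct and actually arrives at the very same formula the paper obtains, $\varphi(y)=g\,\lambda(z)^{-1}\phi(z)$ for $y=g\,\epsilon(z)$, but by a genuinely different and more elementary route. The paper first invokes the Pronk--Scull factorisation to rewrite $Y$ as $G\times_{K/L}Z/L$, then passes to the associated Hilsum--Skandalis bibundle $(G\times G\times Z)/(K\ltimes Z)\to Y$, exhibits an explicit global section built from $\lambda$, and appeals to Proposition~\ref{strict} to conclude; only afterwards is the $G$-map read off from the section. You instead absorb $\lambda$ up front to reduce to the case $n=m$, and then construct the $G$-map $r$ directly from the two defining axioms of an essential equivalence, using essential surjectivity to see that $(h,z)\mapsto h\epsilon(z)$ is a quotient map and fully faithfulness to see that $r(h\epsilon(z))=h\psi(z)$ is well defined. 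Your approach is shorter and isolates exactly why the essential-equivalence conditions suffice; the paper's approach has the advantage of situating the result inside the Hilsum--Skandalis framework developed in Section~5, which is the organising theme there. Your parenthetical alternative via Proposition~\ref{strict} is in fact close to the paper's strategy, though you avoid the Pronk--Scull decomposition by observing directly that $(G\times Z)/K\to Y$ is a homeomorphism.
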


\proof 

Following remark \ref{pronk} we factor the  generalized map $G\ltimes Y\overset{m\ltimes \epsilon}{\gets}K\ltimes Z\overset{n\ltimes \phi}{\rightarrow}G\ltimes X$ as 
$$G\ltimes \left(G\times_{K/L}{Z/L}\right)\overset{a}{\gets}K/L\ltimes Z/L\overset{b}{\gets}K\ltimes Z\overset{n\ltimes \phi}{\rightarrow}G\ltimes X$$
where $L$ is a normal subgroup of $K$ acting freely on $Z$ and $a\circ b=m\ltimes \epsilon$. We have that $m: K \to K/L \subset G$ is the quotient projection followed by the inclusion, $m(k)=\bar k$ and $\epsilon: Z \to G\times_{K/L}{Z/L}$ is the projection on the orbit space followed by the inclusion in the induced product, $\epsilon(z)=[e,[z]]$.

Since $m$ and $n$ are naturally conjugate there is a function $\lambda: Z \rightarrow G\times_{K/L}{Z/L}$ such that $n(k) =\lambda(  k z )\bar k\lambda (  z )^{-1}$ for all  $k \in K, z \in Z$.

The right principal bundle of its associated Hilsum-Skandalis map is:
$$w: {G\times G\times Z}\!/\!_{K\ltimes Z} \to G\times_{K/L}{Z/L}$$ with $w([g,h,z])=[h,[z]].$

The classes of equivalence are given by:
$(g,h,z)\sim (gn(k) ,hm(k),k^{-1}z)$ for all $k\in K$ and $z\sim l z$ for all $l\in L$. Since $n(k) =\lambda(  k z )\bar k\lambda (  z )^{-1}$, we have
$$(g,h,z)\sim (g\lambda(  k z )\bar  k\lambda (  z )^{-1} ,h \bar k ,k^{-1}z).
$$

We will see that $w$ has a global section. Define $\tau: G\times_{K/L}{Z/L} \to  {G\times G\times Z}\!/\!_{K\ltimes Z}$ as $\tau([h,[z]])=[h\lambda(z)^{-1},h,z]$.
To prove that is well defined, we first calculate $\tau([hk^{-1},[klz]])$ for $k\in K, l\in L$. We have that 
$\tau([hk^{-1},[klz]])=[ hk^{-1} \lambda(klz)^{-1},hk^{-1},klz]$. By the equivalence relation is the same as $$ [  hk^{-1}  \lambda(klz)^{-1} \lambda(  kl z )  \overline{kl}\lambda (  z )^{-1},hk^{-1}\overline{kl},{(kl)}^{-1}klz]=[  hk^{-1}\overline{kl} \lambda(z)^{-1},h\bar l,z]$$ 
Since $\bar l =e$, we have that 
$$[  h\bar l \lambda(z)^{-1},h\bar l,z]=[h\lambda(z)^{-1},h,z]=\tau([h,z]).$$ Moreover, $\tau$ is a global section since $w\tau([h,[z]])=w([h\lambda(z)^{-1},h,z])=[h,[z]]$.

We have proved that the map $w: {G\times G\times Z}\!/\!_{K\ltimes Z} \to G\times_{K/L}{Z/L}$ has a global section. From proposition \ref{strict}, the generalized map $G\ltimes Y\overset{m\ltimes \epsilon}{\gets}K\ltimes Z\overset{n\ltimes \phi}{\rightarrow}G\ltimes X$ is then equivalent to an equivariant map.

Moreover, the global section determines the explicit expression of the equivariant map $ \xi\times \varphi: G\ltimes Y \to G\ltimes X$. We will see that the homomorphism $\xi$ between the groups is the identity and the equivariant map is actually a $G$-map. 

Define $\varphi= p\tau$ on objects where 
$$p:{G\times G\times Z}\!/\!_{K\ltimes Z} \to Y$$ is the other leg of the bibundle given by $p([g,h,z])=g\phi(z)$.  Since $w$ is principal, the equivariant map is uniquely defined on arrows.

We have that the map $\varphi: G\times_{K/L}{Z/L} \to  X$ is given by

$$\varphi([h,[z]])=p\tau([h,[z]])=p([  h \lambda(z)^{-1},h,z])=  h \lambda(z)^{-1}\phi(z)$$ and it is the identity on arrows since 
$$ g \varphi([h,[z]])=gh \lambda(z)^{-1}\phi(z)=\varphi([gh,[z]])=\varphi(g[h,[z]]).$$ \qed

We have constructed a $G$-map $\varphi :  Y \rightarrow X$ that makes the following diagram commutative up to equivalence:
$$\xymatrix{
K\ltimes Z \ar[r]^{n \ltimes \phi} \ar[d]^{m \ltimes \e}
&G\ltimes X  \\ G\ltimes Y \ar@{.>}[ur]_{\id \ltimes \varphi} & }
$$
by taking  $y\in Y$, finding $g \in G$ and $z \in Z$, such that $g \e(z) =y$, and defining:
$$
\varphi(y) = g \lambda(z)^{-1} \phi(z).
$$
Note that by construction $(\id \ltimes \varphi)\circ (m \ltimes \e)$ and $n \ltimes \phi$  are related by the natural transformation $\lambda$, i.e $ \lambda(z) \varphi(\e(z))=\phi(z)$ and $n(k) =\lambda(  k z )m(k)\lambda (  z )^{-1}$.

\begin{remark}
Note that without the hypothesis of conjugation, we can have generalized maps  $G\ltimes Y\overset{m\ltimes \epsilon}{\gets}K\ltimes Z\overset{n\ltimes \phi}{\rightarrow}G\ltimes X$ that are not equivalent to an equivariant map.
\end{remark}

\begin{exam}
Consider the following actions on the circle $S^1$. Let $\Z_2 \times \Z_2=\langle \rho, \sigma \rangle$  acting on $S^1$  by  rotation $\rho$ and  reflection $\sigma$ and $\Z_2=\langle \rho, \sigma \rangle/\langle \rho\rangle=\langle \sigma \rangle$  acting on $S^1$  by  reflection.

We have an essential equivalence $m\ltimes \epsilon : (\Z_2 \times \Z_2)\ltimes S^1 \to \Z_2 \ltimes S^1$ with $m(a,b)=b$ and $\epsilon(z)=z^2$ and an equivariant map $n\ltimes \phi : (\Z_2 \times \Z_2)\ltimes S^1 \to \Z_2 \ltimes S^1$ with $n(a,b)=a$ and $\phi(z)=z^2$. The map $w: {\Z_2 \times \Z_2\times S^1}\!/\!_{(\Z_2 \times \Z_2)\ltimes S^1} \to S^1$ is given by $w([g,h,z])=hz^2$. We observe that ${\Z_2 \times \Z_2\times S^1}\!/\!_{(\Z_2 \times \Z_2)\ltimes S^1}$ where
$$(g,h,z)\sim (ga, hb, (a,b)z)$$
 is homeomorphic to $S^1$ via the map $\delta: {\Z_2 \times \Z_2\times S^1}\!/\!_{(\Z_2 \times \Z_2)\ltimes S^1} \to S^1$ given by $\delta([g,h,z])=(g,h)z$ and $\delta^{-1}(z)=[e,e,z]$. The map $w: S^1 \to S^1$ is the double covering of the circle, $w(z)=z^2$ and does not have a global section. Therefore, the generalized map
 $$\Z_2 \ltimes S^1\overset{m\ltimes \epsilon}{\gets}(\Z_2 \times \Z_2)\ltimes S^1\overset{n\ltimes \phi}{\rightarrow}\Z_2 \ltimes S^1$$ is not equivalent to an equivariant map.
 
 Observe that in this case $m$ and $n$ are not naturally conjugate: since $S^1$ is connected and $\Z_2$ is abelian, we have that in this case the homomorphisms
$m$ and $n$ are naturally conjugate if and only if $m=n$.

\end{exam}

\section{Lusternik-Schnirelmann category}
We prove in this section that the $G$-category introduced by Fadell coincides with our orbifold category for a discrete group $G$ acting continuously on a Hausdorff space $X$.
\subsection{G-category}
An open set $U\subseteq X$ is  $G$-{\em invariant} if $gU\subseteq U$ for all $g\in G$. 

\begin{defn} We say that a $G$-invariant subset $U \subseteq X$ is {\em $G$-compressible into a $G$-invariant subset $A \subseteq X$}, 
if the inclusion map $i_U : U \rightarrow X$ is $G$-homotopic to a $G$-map $c: U \rightarrow X$ with $c(U) \subseteq A$.
\end{defn}

 \begin{defn} A $G$-invariant subset $U \subseteq X$  is called {\em $G$-categorical} if $U$ is $G$-compressible into a single orbit. 
\end{defn}
A space $X$ is {\em $G$-contractible} if $X$ is $G$-categorical. We now describe the equivariant category of a $G$-space $X$ introduced by Fadell in \cite{F}.

\begin{defn} The {\em equivariant category} of a $G$-space $X$, denoted $\gcat(X)$, is the least integer $k$ such that $X$ may be covered by $k$ open sets $\{ U_1,\ldots , U_k\}$, each of which is $G$-categorical.
\end{defn}

\subsection{grd-category}
In the rest of this paper $G$ denotes a discrete group.
\begin{defn} We say that a $G$-invariant subset $U \subseteq X$ is {\em grd-compressible into a $G$-invariant subset $A \subseteq X$}, 
if the inclusion map $i_U : G\ltimes U \rightarrow G\ltimes X$ is grd-homotopic to a map $c: G\ltimes U \rightarrow G\ltimes X$ with $c(U) \subseteq A$.
\end{defn}

 \begin{defn} A $G$-invariant subset $U \subseteq X$  is called {\em grd-categorical} if $U$ is grd-compressible into a single orbit. 
\end{defn}
We say that the groupoid $G\ltimes X$ is {\em grd-contractible} if $X$ is grd-categorical. We now introduce the groupoid category of a groupoid $G\ltimes X$.

\begin{defn} The {\em groupoid category} of groupoid $G\ltimes X$, denoted $\cat_{grd}(G\ltimes X)$, is the least integer $k$ such that $X$ may be covered by $k$ open sets $\{ U_1,\ldots , U_k\}$, each of which is grd-categorical.\end{defn}

\begin{thm}
Consider the groupoid $G\ltimes X$ given by the action of the group $G$ on $X$. Then $\cat_{grd}(G\ltimes X)=\gcat(X)$.
\end{thm}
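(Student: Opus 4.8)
The plan is to show the two numbers coincide by proving the two families of ``categorical'' open sets are the same, i.e.\ that a $G$-invariant open set $U\subseteq X$ is $G$-categorical if and only if it is grd-categorical. Since the covers being minimized are the same in both definitions (open covers of $X$ by $G$-invariant sets), the equality of the invariants follows immediately once the pointwise equivalence of the two compressibility notions is established. So the whole proof reduces to one biconditional: $U$ is $G$-compressible into an orbit $Gx_0$ via a $G$-homotopy $\Longleftrightarrow$ $G\ltimes U$ is grd-compressible into $Gx_0$.

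\textbf{The easy direction} ($G$-categorical $\Rightarrow$ grd-categorical). Suppose $i_U\simeq_G c$ with $c(U)\subseteq Gx_0$, realized by a $G$-homotopy $F\co U\times I\to X$. I would package $F$ as a strict equivariant map $\id\ltimes F\co G\ltimes(U\times I)\to G\ltimes X$ and observe that $G\ltimes(U\times I)$ maps by the obvious (strict, identity-on-groups) essential-equivalence-free comparison into $G\ltimes U^I$; more precisely, the adjoint $\hat F\co U\to X^I$ is a genuine $G$-map, giving a strict equivariant map $\id\ltimes\hat F\co G\ltimes U\to G\ltimes X^I$ with $\ev_0\circ(\id\ltimes\hat F)=i_U$ and $\ev_1\circ(\id\ltimes\hat F)=c$ on the nose. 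Taking $\tilde K\ltimes\tilde Y=K'\ltimes Y'=K''\ltimes Y''=G\ltimes U$ with $\sigma=\tau=\e=\id$, the grd-homotopy diagram commutes strictly, so $i_U\simeq_{grd}c$ and $U$ is grd-categorical.

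\textbf{The hard direction} ($\text{grd-categorical}\Rightarrow G\text{-categorical}$) is where the real content — and Theorem~\ref{section} — comes in. Here a grd-homotopy from $i_U$ to $c$ is a generalized map $G\ltimes U\overset{\e}{\gets}\tilde K\ltimes\tilde Y\overset{H}{\to}G\ltimes X^I$ making the evaluation diagram commute up to $2$-isomorphism; it need not be strict. The plan is: (i) unwind the $2$-commutativity of $\ev_0\circ H\sim$ (the composite landing in $i_U$) to pin down the homomorphism part of $H$ up to natural conjugacy — because the left leg $\e$ is an essential equivalence onto $G\ltimes U$ and the composite with $\ev_0$ is equivalent to the \emph{identity-on-groups} inclusion $i_U$, Remark~\ref{pronk} forces the group homomorphism underlying $H$ to be naturally conjugate (via some $\lambda\co\tilde Y\to G$) to the one underlying $\e$; (ii) apply Theorem~\ref{section} to conclude that the generalized map $(\e,H)$ into $G\ltimes X^I$ is equivalent, by a natural transformation, to an honest $G$-map $\Phi\co U\to X^I$; (iii) set $F=\ev\circ\Phi\co U\times I\to X$, check using the $2$-commutativity at $\ev_0$ and $\ev_1$ that $F_0$ is $G$-homotopic to $i_U$ and $F_1$ is $G$-homotopic to $c$ (the natural transformation $\lambda$ from step (ii) only changes maps within a $G$-homotopy class, since $\lambda(z)\varphi(\e(z))=\phi(z)$ exhibits an equivariant path), and $F_1(U)\subseteq Gx_0$; hence $i_U\simeq_G c'$ with $c'(U)\subseteq Gx_0$ and $U$ is $G$-categorical.

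\textbf{Main obstacle.} The delicate point is step (i)–(ii): verifying that the hypothesis ``$m$ and $n$ naturally conjugate'' of Theorem~\ref{section} is actually met. One must argue that, because the target of the grd-homotopy is $G\ltimes X^I$ with the \emph{same} group $G$ and the left leg is an essential equivalence over $G\ltimes U$ whose post-composition with $\ev_0$ is the strict inclusion $i_U$ (group part the identity), the two homomorphisms $\tilde K\to G$ coming from $\e$ and from $H$ must differ only by an inner-type twist parametrized continuously over $\tilde Y$ — i.e.\ be naturally conjugate. This is plausible because $\ev_0$ is a strict $G$-map inducing the identity on $G$, so the group data of $H$ and of $\e$ are identified after composing with essential equivalences; but making it precise requires carefully tracking the homomorphism parts through the factorization of Remark~\ref{pronk} and through the $2$-isomorphism witnessing commutativity. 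Once that is in hand, Theorem~\ref{section} does all the heavy lifting and the rest is the bookkeeping of passing between $G\ltimes X^I$ and $G$-homotopies of maps $U\times I\to X$.
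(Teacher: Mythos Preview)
Your strategy coincides with the paper's: the easy direction packages a $G$-homotopy as a strict map into $G\ltimes X^I$, and for the hard direction one unwinds the $2$-isomorphism on the $\ev_0$ side (which introduces an auxiliary essential equivalence $\nu\co\widehat G\ltimes\widehat U\to\tilde G\ltimes\tilde U$), reads off from the resulting natural transformation $\lambda$ that the homomorphisms underlying $\varepsilon\circ\nu$ and $\tilde H\circ\nu$ are naturally conjugate, and invokes Theorem~\ref{section} to strictify the generalized homotopy to a $G$-map $H\co U\to X^I$. This is exactly what the paper does.

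There is one real imprecision in your step~(iii). You write that ``the natural transformation $\lambda$ from step~(ii) only changes maps within a $G$-homotopy class, since $\lambda(z)\varphi(\e(z))=\phi(z)$ exhibits an equivariant path.'' For a \emph{discrete} group $G$ this is false: an arrow $\lambda(z)\in G$ yields no path in $X$, and two $G$-maps related by a natural transformation need not be $G$-homotopic. So you cannot conclude $F_0\simeq_G i_U$ this way. The paper sidesteps this by using the \emph{explicit} formula for the $G$-map produced in Theorem~\ref{section}: writing $u=g\,\e(\hat u)$ one has $H(u)=g\,\lambda(\hat u)^{-1}\,\tilde H(\nu(\hat u))$, and substituting the relation coming from $\lambda$ between $\ev_0\circ(\tilde H\circ\nu)$ and $i_U\circ(\varepsilon\circ\nu)$ yields $\ev_0 H(u)=i_U(u)$ \emph{on the nose}. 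Likewise, at the $\ev_1$ end the paper does not attempt to show $\ev_1\circ H\simeq_G c$; it only uses the second natural transformation $\Lambda$ (from the $\ev_1$-side $2$-isomorphism, via another essential equivalence $\eta$) to see that $\ev_1 H(u)$ lies in the $G$-orbit of some value of $c$, hence in the single orbit $Gx_0$. With this correction to your endpoint verification, your outline matches the paper's proof.
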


\begin{proof}

We see first that $\gcat(X)\ge\cat_{grd}(G\ltimes X)$. Consider a $G$-invariant subset $U \subseteq X$ which is $G$-categorical. We have that there is a $G$-homotopy $H$ between the inclusion map $i_U : U \rightarrow X$ and a $G$-map $c: U \rightarrow X$ with $c(U) $ contained in a single orbit. Since a $G$-homotopy is also a groupoid homotopy \cite[Proposition 6.13]{AC}, we have that $U$  is  grd-categorical.

To prove that $\gcat(X)\le\cat_{grd}(G\ltimes X)$, we consider a $G$-invariant subset $U \subseteq X$ such that 
the inclusion map $i_U : G\ltimes U \rightarrow G\ltimes X$ is grd-homotopic to a map $c: G\ltimes U \rightarrow G\ltimes X$ with $c(U)$ contained in a single orbit. This means we have generalized map
$G\ltimes U\overset{\varepsilon}{\gets}\tilde G\ltimes \tilde U\overset{\tilde H}{\to}G\ltimes X^I$
that makes the following diagram commutes up to 2-isomorphisms,
$$
\xymatrix{ G\ltimes X&&
{G \ltimes X^I}\ar[rr]^{\ev_{1}}="0" \ar[ll]_{\ev_0 }="2"&&G\ltimes X\\
&{}&{\tilde G\ltimes \tilde U} \ar[u]_{\tilde H} \ar[d]^{\varepsilon}&{}&\\
&&{G\ltimes U}\ar[uull]^{i_U}\ar[uurr]_{c}&&
}
$$
We will prove that there is a $G$-homotopy $H$ between the inclusion map $i_U : U \rightarrow X$ and a $G$-map $c: U \rightarrow X$ with image in an orbit.

The left diagram commutes up to 2-isomorphism, meaning there is $\widehat{G} \ltimes \widehat{U}$ and an essential equivalence $\nu :\widehat{G} \ltimes \widehat{U} \rightarrow \tilde G\ltimes \tilde U $, such that the following diagram commutes up to natural transformation,

$$
\xymatrix{ G\ltimes X&&
{G \ltimes X^I} \ar[ll]_{\ev_0 }="2"&&\\
&\widehat{G}\ltimes \widehat{U} \ar[r]_{\nu}&{\tilde G\ltimes \tilde U} \ar[u]_{\tilde H} \ar[d]^{\varepsilon}&{}\\
&&{G\ltimes U}  \ar@/^2.0pc/[uull]^{i_U}&
}
$$
i.e the following diagram commutes up to natural transformation,
$$
\xymatrix{ G\ltimes X&&
{G \ltimes X^I} \ar[ll]_{\ev_0 }="2"&&\\
&{}&{\widehat{G}\ltimes \widehat{U}} \ar[u]_{\tilde H \circ \nu} \ar[d]^{ \varepsilon\circ \nu }&{}\\
&&{G\ltimes U}\ar[uull]^{i_U}&
}
$$
Let us call $n \ltimes \phi = \tilde H \circ \nu$ and $m \ltimes \e =\varepsilon\circ \nu$. The commutativity up to natural transformation $ \ev_0  \phi \sim \e$, means that there exists a natural transformation, $\lambda : \widehat{U} \rightarrow G$ such that for every $\hat u$ we have $\lambda(\hat u) \ev_0  \phi(\hat u) = \e(\hat u)$. Naturality means that $\lambda( k \hat u ) m(k) = n(k) \lambda ( \hat u )$, which can be re-written as $n(k) =\lambda(  k \hat u )m(k)\lambda ( \hat u )^{-1} \forall k,\hat u$. This is precisely the hypothesis of theorem \ref{section}, which gives a map $H$, such that the following diagram commutes up to the natural transformation $\lambda$:
$$\xymatrix{
\widehat{G}\ltimes \widehat{U} \ar[r]^{\tilde H \circ \nu} \ar[d]^{\varepsilon \circ \nu }
&G\ltimes X^I  \\ G\ltimes U \ar@{.>}[ur]_{H} & }
$$

Now we check that the new homotopy is from the inclusion to a map that lands in an orbit.

By construction of $H$ we are finding $\hat u \in \widehat{U}$ and $ g \in  G$ such that $ g \e(\hat u)=u$, and then,
$H (u) =  g \lambda(  \hat u )^{-1}\tilde H  (\nu ( \hat  u)).$
Evaluating at $t=0$, we have
$$
ev_0(H (u)) =  g\lambda(  \hat u )^{-1} ev_0( \tilde H ( \nu ( \hat  u))) 
$$
which is equal to
$$g \lambda(  \tilde u )^{-1}\lambda(\tilde u) i_U (\e(\hat u)) =  gi_U ( \e(\hat u)) = i_U ( g\e(\hat u)) =  i_U( u)$$
by the commutativity up to natural transformation $\lambda$ and the fact that $i_U$ is $G$-equivariant.

To evaluate at $t=1$, we have that there is a groupoid $\widehat{\widehat{G}} \ltimes \widehat{\widehat{U}}$ and an essential equivalence $\eta :\widehat{\widehat{G}} \ltimes \widehat{\widehat{U}} \rightarrow \tilde G\ltimes \tilde U $  that makes the following diagram commute up to natural transformations 
$$
\xymatrix{ &&
{G \ltimes X^I}\ar[rr]^{\ev_{1}}="0" &&G\ltimes X\\
&\widehat{G}\ltimes \widehat{U} \ar[r]_{\nu}&{\tilde G\ltimes \tilde U} \ar[u]_{\tilde H } \ar[d]^{\varepsilon}& \widehat{\widehat{G}} \ltimes \widehat{\widehat{U}}  \ar[l]^{\eta}&\\
&&{G\ltimes U}  \ar@/_2.0pc/[uurr]_{c} \ar@/^8.0pc/[uu]^H&&
}
$$
since $c$ is an equivariant map and the right part of the diagram commutes up to 2-isomorphism. 

Let us see that the homotopy lands in an orbit. By definition of  $H$, for $u \in U$, we find $ g \in G$, $\hat u \in \widehat{U}$ with $g\e(\hat u) = u$, where $m \ltimes \e =\varepsilon\circ \nu$. Since $\eta$ is also an essential equivalence, there exists $\tilde g \in \widetilde{G}$ and $\widehat{\widehat{u}} \in \widehat{\widehat{U}}$, such that  $\tilde g \eta(\widehat{\widehat{u}})=\nu(\hat u)$. The commutativity of the right part of the diagram up to natural transformation says that, for some $\Lambda : \widehat{\widehat{U}} \rightarrow G$, we have
$$
\Lambda(\widehat{\widehat{u}})ev_1(\tilde H(\eta(\widehat{\widehat{u}}))) = c(\varepsilon(\eta(\widehat{\widehat{u}}))).
$$
By definition of $H$, we have
$$
ev_1(H (u)) =  g \lambda(  \hat u )^{-1} ev_1(\tilde H ( \nu(\hat  u))). 
$$
Using that $\tilde H$ is equivariant with respect to some homomorphism $l : \tilde G \rightarrow G$, we have:
$$
ev_1(H (u)) =  g  \lambda(  \hat u )^{-1}ev_1(\tilde H ( \tilde g \eta(\widehat{\widehat{u}}))) = g \lambda(  \hat u )^{-1}l(\tilde g) ev_1(\tilde H (\eta(\widehat{\widehat{u}})))
$$
which is equal to
$g \lambda(  \hat u )^{-1}l(\tilde g)\Lambda(\widehat{\widehat{u}})^{-1} c(\varepsilon(\eta(\widehat{\widehat{u}})))$.
But since $c : U \rightarrow X$ lands in an orbit, then $g \lambda(  \hat u )^{-1}l(\tilde g)\Lambda(\widehat{\widehat{u}})^{-1} c(\varepsilon(\eta(\widehat{\widehat{u}})))$ too. 
\end{proof}

The equivariant category is invariant under Morita equivalence for compact Lie group actions on metrizable spaces as shown in \cite{morita}.

\begin{cor}
The groupoid category is invariant under Morita equivalence for translation groupoids $G\ltimes X$ where $G$ is a finite group and $X$ a metrizable space.
\end{cor}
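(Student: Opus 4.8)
The plan is to chain together the two main results already established in the paper. First, by the preceding Theorem (the one immediately above this corollary), we have the identity $\cat_{grd}(G\ltimes X)=\gcat(X)$ for any action of a discrete group $G$ on a Hausdorff space $X$; in particular this holds when $G$ is finite and $X$ is metrizable. So the groupoid category is literally computed by the equivariant category of the underlying $G$-space. Second, the cited result of \cite{morita} tells us that $\gcat$ is a Morita invariant for compact Lie group actions on metrizable spaces — and finite groups are in particular compact Lie groups, so that hypothesis is met.

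The argument I would write is then essentially a transport of invariance along the equality. Suppose $G\ltimes X$ and $H\ltimes Y$ are Morita equivalent translation groupoids with $G,H$ finite and $X,Y$ metrizable. I would first note that Morita equivalence of translation groupoids, via Remark~\ref{pronk} and Pronk--Scull's Proposition, is generated by the two elementary moves (quotient by a free normal subgroup action, and induction along a subgroup inclusion $K\le G$ with $G\times_K Z$); one should check that each move, starting from a finite group acting on a metrizable space, again yields a finite group acting on a metrizable space. This is routine: $K/L$ is finite and $Z/L$ is metrizable (a free action of a finite group on a metrizable space has metrizable quotient), and $G\times_K Z$ is metrizable since it is a finite disjoint-union-type quotient $G\times Z/\!\sim$ with $G$ finite. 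Hence all the intermediate groupoids in a Morita chain satisfy the standing hypotheses, and the equivariant category result of \cite{morita} applies to each.

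Combining: $\cat_{grd}(G\ltimes X)=\gcat(X)$ by the Theorem, $\gcat(X)=\gcat(Y)$ because $G\ltimes X\sim_M H\ltimes Y$ and $\gcat$ is Morita invariant under these hypotheses by \cite{morita}, and $\gcat(Y)=\cat_{grd}(H\ltimes Y)$ again by the Theorem. Therefore $\cat_{grd}(G\ltimes X)=\cat_{grd}(H\ltimes Y)$, which is exactly the assertion that groupoid category is a Morita invariant in this class.

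The only real obstacle — and it is a minor bookkeeping point rather than a genuine difficulty — is making sure the class of groupoids ``finite group acting on metrizable space'' is closed under the operations generating Morita equivalence, so that the external invariance input from \cite{morita} is actually applicable at every stage (equivalently, so that a Morita equivalence between two objects in this class can be realized through objects in this class). If one wanted to be fully careful one would invoke Remark~\ref{pronk} to reduce a single essential equivalence to the composite $K\ltimes Z \to K/L\ltimes Z/L \to H\ltimes(H\times_{K/L}Z/L)$ and verify metrizability is preserved at the middle term; everything else is immediate. I would present the corollary's proof in two or three sentences, citing the Theorem and \cite{morita}, with a parenthetical remark that the metrizability and finiteness hypotheses are preserved along the moves of the Pronk--Scull Proposition.
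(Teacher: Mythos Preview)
Your proposal is correct and follows the same approach as the paper: the corollary is deduced immediately from the identity $\cat_{grd}(G\ltimes X)=\gcat(X)$ of the preceding Theorem together with the Morita invariance of $\gcat$ for compact Lie group actions on metrizable spaces from \cite{morita}. Your extra verification that the class ``finite group acting on a metrizable space'' is closed under the Pronk--Scull moves is more bookkeeping than the paper supplies (it simply invokes \cite{morita} as a black box), but it is harmless and the core argument is identical.
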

Therefore, groupoid category is well defined for orbifolds given by the action of finite group. 

\begin{defn} Let $G$ be a finite group acting on $X$. The {\em orbifold category} of the orbifold $\X$ presented by the groupoid $G\ltimes X$, denoted $\cat_{orb}(\X)$, is the groupoid category of $G\ltimes X$, i.e. $\cat_{orb}(\X)=\cat_{grd}(G\ltimes X)$.
\end{defn}

\bibliography{AngelColman}
\bibliographystyle{abbrv}
\end{document}